\newtheorem{theorem}{Theorem}[section]
\newtheorem{lemma}[theorem]{Lemma}
\newtheorem{cor}[theorem]{Corollary}
\theoremstyle{definition}
\theoremstyle{remark}
\newtheorem{remark}[theorem]{\bf{Remark}}
\numberwithin{equation}{section}
\begin{document}
		\title[Enhancement of the Cauchy-Schwarz Inequality and  Its Implications ... ]  {Enhancement of the Cauchy-Schwarz Inequality and Its Implications for Numerical Radius Inequalities}
	\author[R.K. Nayak]{  Raj Kumar Nayak}

	\address{(Nayak) Department of Mathematics, GKCIET, Malda, India}
\email{rajkumar@gkciet.ac.in/rajkumarju51@gmail.com}
	\subjclass[2010]{Primary 47A12, 47A30, Secondary  47A63, 15A60}
	\keywords{Cauchy-Schwarz inequality; Numerical radius; Norm inequality}
	\date{}
	\maketitle 
	\begin{abstract}
		In this article, we establish an improvement of the Cauchy-Schwarz inequality. Let $x, y \in \mathcal{H},$ and let $f: (0,1) \rightarrow \mathbb{R}^+$ be a well-defined function, where $\mathbb{R}^+$ denote the set of all positive real numbers. Then \[|\langle x, y \rangle|^2 \leq \frac{f(t)}{1+f(t)} \|x\|^2 \|y\|^2  + \frac{1}{1+ f(t)} |\langle x, y \rangle | \|x\|\|y\|. \] We have applied this result to derive new and improved upper bounds for the numerical radius. 
		\end{abstract}
		\section{Introduction}
	Throughout the article, $\mathcal{H}$ represents the complex Hilbert space with the inner product $\langle \cdot, \cdot \rangle$. The Cauchy-Schwarz inequality is a fundamental result in linear algebra and analysis, which says that for any vectors $x, y \in \mathcal{H},$ \[|\langle x, y \rangle| \leq \|x\| \|y\|. \]
	This inequality establishes a crucial relationship between the geometric and algebraic properties of vectors. The Cauchy-Schwarz inequality is instrumental in various fields such as functional analysis, probability theory, and quantum mechanics, where it facilitates the derivation of numerous theoretical results and practical applications. Therefore, enhancing this fundamental inequality holds significant importance for researchers. In recent times, several researchers have improved the Cauchy-Schwarz inequality through various approaches. We encourage readers to explore the referenced articles \cite{Kittaneh MIA,  filomat, Alomari Ricerche,  Nayak IIST, Stojiljković} for further insight.  
	\noindent	Let $\mathcal{B}({\mathcal{H}})$ denote the $C^*$-algebra of all bounded linear operators on a complex Hilbert space $\mathcal{H}.$  The absolute value of $T,$ is defined as $|T| = (T^*T)^{\frac{1}{2}},$ where $T^*$ represents the Hilbert adjoint of the operator $T.$ The numerical range of $T \in \mathcal{B}(\mathcal{H})$ is denoted by $W(T)$, is the image of the unit sphere of $\mathcal{H}$ under the mapping $x \rightarrowtail \langle Tx, x \rangle.$ The numerical radius and operator norm of an operator  $T \in \mathcal{B}(\mathcal{H})$, are denoted by $w(T)$ and $\|T\|$ respectively, and are defined as 
		
		 \[w(T) = \sup_{\|x\|=1} \left|\langle Tx, x \rangle \right|\]
		and \[ \|T\| = \sup_{\|x\|=1} \|Tx\|.\]

	It is widely recognized that $w(\cdot)$ constitutes a norm on 
$\mathcal{B}(\mathcal{H})$  which is comparable to the standard operator norm through the following inequality:	
		\begin{equation}\label{eq 1}
		 \frac{\|T\|}{2} \leq w(T) \leq \|T\|.
		 \end{equation}
		 Several advancements of this inequality have been made in recent years.   Here, we enumerate a few of these improvements:\\ 
		 \noindent In \cite{kstudiamath1}, Kittaneh established that if $T \in \mathcal{B}(\mathcal{H})$, then 
		  \begin{equation} \label{eq kittaneh}
		  w(T) \leq \frac{1}{2} \left\||T| + |T^*|\right\|. 
		  \end{equation}
		  This inequality was subsequently extended by El-Haddad et al. \cite{kstudiamath3}, asserting that, if $T \in \mathcal{B}(\mathcal{H}),$ then 
		   \begin{equation}\label{eq el kit}
		   w^{2r}(T) \leq \frac{1}{2} \left\||T|^{2r} + |T^*|^{2r}\right\|.
		   \end{equation}
		   		  In 2015,  Abu-Omar et al. \cite{abu omar} proved that for $T \in \mathcal{B}(\mathcal{H}),$
		   		   \begin{equation}\label{eq 3}
		   		   w^2(T) \leq \frac{1}{4} \left\||T|^2 + |T^*| \right\| + \frac{1}{2} w(T^2).
		   		   \end{equation}
		   		   In 2021, Bhunia et al. \cite{bulletin sci} proved that, for $T \in \mathcal{B}(\mathcal{H}),$
		   		   \begin{equation}\label{eq 11}
		   		   w^2(T) \leq \frac{1}{4} \left\||T|^2 + |T^*|^2 \right\| + \frac{1}{2} w\left(|T||T^*| \right).
		   		   \end{equation} 
		   		   
		  Subsequently, Dragomir \cite{dragomir} established the following inequality for the product of two operators, asserting that for $T, S \in \mathcal{B}(\mathcal{H})$ and $r \geq 1$,
		   \begin{equation} \label{eq dragomir}
		   w^r(S^*T) \leq \frac{1}{2} \left\||T|^{2r} + |S|^{2r} \right\|.
		   \end{equation}
		   Recently, Al-Dolat et al. \cite{filomat} improved inequality (\ref{eq dragomir}), stating that for $T, S \in \mathcal{B}(\mathcal{H})$ and $\lambda \geq 0,$ 
		   \begin{equation} \label{eq 4}
		   w^2(S^*T) \leq \frac{1}{2(1+\lambda)} \left\||T|^2 + |S|^2 \right\| w(S^*T) + \frac{\lambda}{2(1+\lambda)} \left\||T|^4 + |S|^4 \right\|.
		   \end{equation}
		   For further details on recent work regarding the numerical radius inequalities, readers are referred to \cite{dragomir lama, Pintu laa 2024, book, raj, Nayak Acta, Sahoo}.
		   The renowned Young inequality asserts that for any two positive real numbers $x$ and $y$, along with $t$ lying in the interval $[0,1]$, the following relation holds true: 
		   \begin{equation} \label{eq 7}
		   x^ty^{1-t} \leq tx + (1-t) y.
		   \end{equation} 
		 For $t = \frac{1}{2}$, we encounter the famous arithmetic-geometric mean inequality, which stipulates that for any two positive real numbers $x$ and $y$, the following inequality holds true:
		   \begin{equation}\label{eq am-gm}
		   \sqrt{xy} \leq \frac{x+y}{2}.
		   \end{equation}
		In this article, we present two advancements of the Cauchy-Schwarz inequality as follows:\\
		 	Let $f: (0,1) \rightarrow \mathbb{R}^+$ be a well-defined function. Then, for any $x, y \in \mathcal{H},$
		 \begin{equation} \label{gencauchy}
	 |\langle x, y \rangle|^2 \leq \frac{f(t)}{1+f(t)} \|x\|^2 \|y\|^2  + \frac{1}{1+ f(t)} |\langle x, y \rangle | \|x\|\|y\| \leq \|x\|^2 \|y\|^2 
		 \end{equation}
		 and 
		 \begin{equation} \label{impC-S 2}
		 	|\langle x, y \rangle|^2 \leq \frac{f(t)}{2(1+ f(t))} \|x\|^2 \|y|^2 + \frac{2+ f(t)}{2(1+ f(t))} |\langle x, y \rangle| \|x\| \|y\|.
		 \end{equation}
		We have utilized this to derive several upper bounds for numerical radius inequalities for operators and products of operators, enhancing and refining inequalities (\ref{eq 1}),  (\ref{eq kittaneh}),  (\ref{eq el kit}), (\ref{eq 11}), (\ref{eq dragomir}), (\ref{eq 4}).
		\section{Main Results}
	In this section, we will establish various improvements of the upper bounds for numerical radii. To achieve this, we will utilize the following well-known lemmas. The first lemma is a consequence of the spectral theorem in conjunction with  Jensen's inequality.
		\begin{lemma}\label{positive op}\cite{mccarthy}
			Let $T \in \mathcal{B}(\mathcal{H})$ be a positive operator and $x$ be an unit vector in $\mathcal{H}.$ Then, for $r \geq 1,$ the inequality \[\langle Tx, x \rangle^r \leq \langle T^rx, x \rangle \]
			holds.
		\end{lemma}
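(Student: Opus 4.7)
The plan is to follow the hint preceding the lemma: combine the spectral theorem with Jensen's inequality applied to the convex power function $\varphi(\lambda) = \lambda^r$ on $[0,\infty)$.

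First, I would invoke the spectral theorem for the positive operator $T$. Since $T \geq 0$, the spectrum $\sigma(T)$ is contained in $[0,\infty)$, and there is a unique projection-valued measure $E$ on the Borel subsets of $\sigma(T)$ with $T = \int_{\sigma(T)} \lambda\, dE(\lambda)$. Fixing the unit vector $x$, I would then define the scalar spectral measure $\mu_x(\Omega) := \langle E(\Omega)x, x\rangle$; the identity $E(\sigma(T)) = I$ combined with $\|x\|=1$ shows that $\mu_x$ is a Borel probability measure supported on $[0,\infty)$. The Borel functional calculus then rewrites both sides of the target inequality as moments of $\mu_x$:
\[\langle Tx, x\rangle = \int_{\sigma(T)} \lambda\, d\mu_x(\lambda), \qquad \langle T^r x, x\rangle = \int_{\sigma(T)} \lambda^r\, d\mu_x(\lambda).\]

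The second step is to apply Jensen's inequality. For $r \geq 1$, the function $\varphi(\lambda) = \lambda^r$ is convex on $[0,\infty)$, so Jensen's inequality for the probability measure $\mu_x$ yields
\[\left(\int \lambda\, d\mu_x\right)^r \leq \int \lambda^r\, d\mu_x,\]
which is precisely $\langle Tx, x\rangle^r \leq \langle T^r x, x\rangle$.

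This is a standard textbook argument, so there is no genuine obstacle. The only points requiring brief verification are that $\mu_x$ is indeed a probability measure (immediate from $\|x\|=1$ and total-mass one of the spectral measure) and that $\langle Tx, x\rangle$ lies in the domain where $\varphi$ is convex, which holds because positivity of $T$ forces $\langle Tx, x\rangle \geq 0$. An alternative, more elementary route would be to diagonalize in the finite-dimensional case and pass to a limit, but the spectral-theorem approach handles all of $\mathcal{B}(\mathcal{H})$ uniformly and mirrors the hint given in the text.
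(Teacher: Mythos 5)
Your argument is correct and follows exactly the route the paper indicates for this cited lemma, namely combining the spectral theorem (via the scalar spectral measure $\mu_x$, which is a probability measure since $\|x\|=1$) with Jensen's inequality for the convex function $\lambda \mapsto \lambda^r$ on $[0,\infty)$. The paper gives no proof of its own beyond this hint, so there is nothing further to compare.
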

	The next lemma is a norm inequality for a non-negative convex function.
	\begin{lemma}\label{convex op} \cite{aujla}
		Let $h$ be a non-negative convex function on $[0, \infty)$ and $A, B \in \mathcal{B}(\mathcal{H})$ be positive operators. Then  \[\left\|h\left(\frac{A+B}{2}\right)\right\| \leq \left\|\frac{h(A) + h(B)}{2}\right\| \]
		holds.
		In particular if $r \geq 1$, then \[\left\|\left(\frac{A+B}{2} \right)^r \right\| \leq \left\|\frac{A^r+B^r}{2}\right\|. \]
	\end{lemma}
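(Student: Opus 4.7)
The plan is to give a direct, self-contained argument for the ``in particular'' case of the lemma using Lemma~\ref{positive op} and scalar convexity of $t\mapsto t^{r}$, since that special case is the form actually invoked in the numerical radius bounds to come; the general convex statement I would then handle by appeal to the integral representation of convex functions, as in the cited proof of \cite{aujla}.

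For the power case $h(t)=t^{r}$ with $r\geq 1$, my first move is the elementary identity $\|T^{r}\|=\|T\|^{r}$ for any positive operator $T$, which follows from the fact that the operator norm of a positive operator is its spectral radius together with the spectral mapping theorem. Applied with $T=\tfrac{A+B}{2}$, this reduces the desired estimate to
\[
\Big\|\tfrac{A+B}{2}\Big\|^{r}\leq \Big\|\tfrac{A^{r}+B^{r}}{2}\Big\|.
\]
I would then fix a unit vector $x\in\mathcal{H}$ and apply scalar convexity of $t\mapsto t^{r}$ on $[0,\infty)$ to the nonnegative average $\tfrac12(\langle Ax,x\rangle+\langle Bx,x\rangle)$, obtaining
\[
\Big(\tfrac{\langle Ax,x\rangle+\langle Bx,x\rangle}{2}\Big)^{r}\leq \tfrac{\langle Ax,x\rangle^{r}+\langle Bx,x\rangle^{r}}{2}.
\]
Lemma~\ref{positive op} applied separately to the positive operators $A$ and $B$ dominates the right-hand side by $\langle \tfrac{A^{r}+B^{r}}{2}x,x\rangle\leq \|\tfrac{A^{r}+B^{r}}{2}\|$. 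The left-hand side is just $\langle \tfrac{A+B}{2}x,x\rangle^{r}$; taking the supremum over unit $x$ and using positivity of $\tfrac{A+B}{2}$ produces $\|\tfrac{A+B}{2}\|^{r}$ on the left, closing the argument.

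For the general non-negative convex $h$ on $[0,\infty)$, my plan is to invoke the standard representation
\[
h(t)=h(0)+h'(0^{+})\,t+\int_{0}^{\infty}(t-s)_{+}\,d\mu(s),
\]
with $d\mu$ the non-negative measure associated to the non-decreasing derivative of $h$, and to reduce the norm inequality to the single estimate
\[
\Big\|\bigl(\tfrac{A+B}{2}-sI\bigr)_{+}\Big\|\leq \Big\|\tfrac{(A-sI)_{+}+(B-sI)_{+}}{2}\Big\|
\]
for each $s\geq 0$, which one integrates against $d\mu$ and combines with the affine part. I expect this positive-part estimate to be the main obstacle: because the positive-part map is nonlinear on self-adjoint operators, the operator-level inequality $\bigl(\tfrac{A+B}{2}-sI\bigr)_{+}\leq \tfrac12\bigl((A-sI)_{+}+(B-sI)_{+}\bigr)$ does not follow from its scalar analogue, and one is forced into a variational or Weyl-type majorization argument of the kind used in \cite{aujla}. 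This is the step that prevents the general case from falling out by the same McCarthy plus scalar-Jensen route that handled the power case.
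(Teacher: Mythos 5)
The paper gives no proof of this lemma at all: it is stated as a citation to Aujla and Silva \cite{aujla}, so any argument you supply is by definition a different route. Your treatment of the power case $h(t)=t^{r}$ is correct and complete: the identity $\|T^{r}\|=\|T\|^{r}$ for positive $T$, the scalar Jensen step $\bigl(\tfrac{a+b}{2}\bigr)^{r}\leq\tfrac{a^{r}+b^{r}}{2}$, and McCarthy's inequality (Lemma~\ref{positive op}) applied to $A$ and $B$ separately chain together exactly as you describe, and the passage to the supremum is legitimate because $t\mapsto t^{r}$ is increasing and the numerical radius of a positive operator equals its norm. This is worth having, since the power case (with $r=2$) is the only form of the lemma the paper ever invokes, and your argument makes that instance self-contained using only tools already present in Section~2. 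For the general non-negative convex $h$ your integral-representation plan is only a sketch, and you correctly identify the genuine obstruction: the positive-part map is not operator-convex in the naive sense, the pointwise operator inequality for $(\cdot - sI)_{+}$ fails, and Aujla--Silva in fact prove a weak-majorization statement for singular values (valid for all unitarily invariant norms) rather than anything that reduces to the scalar-Jensen-plus-McCarthy route; note also that a non-negative convex $h$ on $[0,\infty)$ need not be monotone, so even the reduction $\|h(T)\|=h(\|T\|)$ you used for powers is unavailable there. Since you explicitly defer that part to the cited source, exactly as the paper does, there is no error --- just be aware that your submission proves strictly less than the full stated lemma and strictly more than the paper itself proves.
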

The subsequent result presents a generalized form of the mixed Schwarz inequality.
	\begin{lemma}\label{mixed schwarz} \cite{Res}
		Let $T \in \mathcal{B}(\mathcal{H}),$ and $x, y \in \mathcal{H}$. Let $g, h$ are two non-negative continuous function on $[0, \infty) $ satisfying $g(t) h(t)=t.$ Then \[|\langle Tx, y \rangle| \leq \left\| g(|T|)x\right\| \left\|h(|T^*|)y \right\|.\]
	\end{lemma}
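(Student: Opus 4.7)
The plan is to reduce this generalized mixed Schwarz inequality to the ordinary Cauchy-Schwarz inequality in $\mathcal{H}$ by first splitting $|T|$ as $g(|T|)h(|T|)$ and then transferring one factor across the inner product by means of the polar decomposition of $T$.

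First I would invoke the polar decomposition $T = U|T|$, where $U$ is a partial isometry whose initial space is the closure of the range of $|T|$ and whose final space is the closure of the range of $T$. A key auxiliary fact that I would record is the intertwining identity $U\varphi(|T|)U^* = \varphi(|T^*|)$, valid for every continuous $\varphi$ on $[0,\infty)$ with $\varphi(0) = 0$. This is obtained by first verifying $|T^*|^2 = TT^* = U|T|^2 U^*$ and hence $U|T|^n U^* = |T^*|^n$ for every $n \geq 1$, and then extending to all admissible $\varphi$ through the continuous functional calculus (Weierstrass approximation).

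Next, since $g$ and $h$ are continuous on $[0,\infty)$ with $g(t)h(t) = t$, the spectral theorem yields $|T| = g(|T|)h(|T|) = h(|T|)g(|T|)$ with both factors self-adjoint. Substituting this factorization and using self-adjointness to move $h(|T|)$ onto the second entry,
\begin{equation*}
\langle Tx, y \rangle \;=\; \langle U h(|T|) g(|T|) x, y \rangle \;=\; \langle g(|T|) x, h(|T|) U^* y \rangle ,
\end{equation*}
so that the ordinary Cauchy-Schwarz inequality produces $|\langle Tx, y \rangle| \leq \|g(|T|)x\|\,\|h(|T|) U^* y\|$. A short calculation then recasts the second factor in the required form,
\begin{equation*}
\|h(|T|) U^* y\|^2 \;=\; \langle U h^2(|T|) U^* y, y \rangle \;=\; \langle h^2(|T^*|) y, y \rangle \;=\; \|h(|T^*|) y\|^2 ,
\end{equation*}
and the lemma follows.

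The principal technical obstacle is the possibility that $g(0) \neq 0$ or $h(0) \neq 0$, since the identity $U \varphi(|T|) U^* = \varphi(|T^*|)$ strictly requires $\varphi(0) = 0$ (otherwise a term supported on $\ker U^* = \ker T^*$ is dropped). I would bypass this by decomposing $\mathcal{H}$ as $\ker T \oplus (\ker T)^\perp$ and noting that on $\ker T$ the desired inequality reads $0 \leq \|g(|T|)x\|\,\|h(|T^*|)y\|$ and is therefore trivial, whereas on $(\ker T)^\perp$ one may replace $g$ and $h$ by continuous modifications that agree with them on $\sigma(|T|)\setminus\{0\}$ and vanish at $0$, leaving the relevant norms and the equality $g(t)h(t)=t$ intact on the spectrum actually seen.
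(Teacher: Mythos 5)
The paper does not actually prove this lemma---it is imported verbatim from Kittaneh's paper \cite{Res}---so the comparison is with the standard argument, and your main line is exactly that argument: polar decomposition $T=U|T|$, the factorization $|T|=h(|T|)g(|T|)$ from $g(t)h(t)=t$, Cauchy--Schwarz, and the intertwining $U\varphi(|T|)U^*=\varphi(|T^*|)$ for continuous $\varphi$ with $\varphi(0)=0$. All of that is sound. The defect is in the final display and in the patch you offer for it. The asserted equality $\|h(|T|)U^*y\|^2=\langle h^2(|T^*|)y,y\rangle$ does fail exactly in the case you flag ($h(0)\neq 0$ with $\ker T^*\neq\{0\}$), but your repair does not work. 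Splitting $x$ along $\ker T\oplus(\ker T)^\perp$ attacks the wrong kernel: the discrepancy between $Uh^2(|T|)U^*$ and $h^2(|T^*|)$ is supported on $\ker U^*=\ker T^*$ and is felt by $y$, not by $x$. And the ``continuous modification of $h$ vanishing at $0$ and agreeing with $h$ on $\sigma(|T|)\setminus\{0\}$'' need not exist: if $0$ is a non-isolated point of $\sigma(|T|)$ (say $\sigma(|T|)=[0,1]$) and $h(0)\neq 0$, continuity forces any such $\tilde h$ to satisfy $\tilde h(0)=\lim_{t\to 0^+}h(t)=h(0)\neq 0$.

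Fortunately the gap is easy to close, because the term you would be discarding has the favourable sign, so the equality you wanted can simply be weakened to an inequality. Write $h^2=h^2(0)+\psi$ with $\psi$ continuous and $\psi(0)=0$. Then $Uh^2(|T|)U^*=h^2(0)\,UU^*+\psi(|T^*|)=h^2(|T^*|)-h^2(0)\,(I-UU^*)\leq h^2(|T^*|)$, since $h^2(0)\geq 0$ and $I-UU^*$ is the (positive) orthogonal projection onto $\ker T^*$. Hence $\|h(|T|)U^*y\|^2=\langle Uh^2(|T|)U^*y,y\rangle\leq\langle h^2(|T^*|)y,y\rangle=\|h(|T^*|)y\|^2$, which is all the lemma needs. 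Replace your last equality by this inequality and delete the kernel decomposition; the rest of your argument stands as written.
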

The next result is the famous Buzano's generalization of the Cauchy-Schwarz inequality.
\begin{lemma}\label{buzano}\cite{Buzano}
	Let $x, y, e \in \mathcal{H}$ with $\|e\|=1.$ Then  \[|\langle x, e\rangle \langle e , y\rangle| \leq \frac{1}{2}\left(\|x\|\|y\| + |\langle x, y \rangle| \right) .\]
\end{lemma}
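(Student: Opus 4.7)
The plan is to express the product $\langle x, e \rangle \langle e, y \rangle$ as a matrix coefficient of the rank-one orthogonal projection onto the line through $e$, and then decompose that projection as a linear combination of the identity and a unitary reflection. Concretely, set $P \in \mathcal{B}(\mathcal{H})$ to be $Pz = \langle z, e \rangle e$. Since $\|e\| = 1$, $P$ is a self-adjoint idempotent, so the operator $U := 2P - I$ satisfies $U^{*} = U$ and $U^{2} = 4P^{2} - 4P + I = I$; in particular $U$ is unitary (a reflection across $e^{\perp}$).

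First I would verify the identity
$$\langle x, e \rangle \langle e, y \rangle = \langle Px, y \rangle,$$
which is immediate from the definition of $P$ and conjugate-linearity of the inner product in the second slot. Next, I would use the decomposition $2P = I + U$ to write
$$2 \langle Px, y \rangle = \langle x, y \rangle + \langle Ux, y \rangle.$$

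The finishing step combines the triangle inequality with a single application of Cauchy-Schwarz to the last term. Since $U$ is unitary, $\|Ux\| = \|x\|$, and therefore
$$2 \bigl|\langle Px, y \rangle\bigr| \le |\langle x, y \rangle| + \|Ux\|\,\|y\| = |\langle x, y \rangle| + \|x\|\,\|y\|.$$
Dividing by $2$ and translating back through the identity in the first step yields the Buzano bound. There is no real obstacle; the only delicate point is checking that $U = 2P - I$ is genuinely a unitary involution, which reduces to the fact that $P$ is an orthogonal projection of rank one (guaranteed by $\|e\| = 1$).
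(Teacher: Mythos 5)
Your argument is correct and complete. The paper itself offers no proof of this lemma; it is quoted from Buzano's original article, so there is nothing internal to compare against. The route you take --- writing $\langle x,e\rangle\langle e,y\rangle=\langle Px,y\rangle$ for the rank-one orthogonal projection $P$, decomposing $2P=I+U$ with $U=2P-I$ a self-adjoint unitary involution, and then applying the triangle inequality together with one use of Cauchy--Schwarz and $\|Ux\|=\|x\|$ --- is the standard short proof of Buzano's inequality (essentially the Fujii--Kubo argument), and every step you list checks out: $P^2=P=P^*$ follows from $\|e\|=1$, hence $U^2=I$ and $U^*=U$, so $U$ is indeed unitary. No gaps.
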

Recently, Dolat et al. \cite{filomat} presented a new improvement of the Cauchy-Schwarz inequality. The following lemma states:
\begin{lemma} \cite{filomat} \label{cauchyimp}
	Let $x, y \in \mathcal{H}.$ Then for any $\lambda \geq 0,$  \[|\langle x, y \rangle|^2 \leq \frac{1}{\lambda+1} \|x\|\|y\||\langle x, y\rangle| + \frac{\lambda}{\lambda + 1}\|x\|^2\|y\|^2 \leq \|x\|^2\|y\|^2. \]
\end{lemma}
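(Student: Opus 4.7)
The plan is to reduce the claimed two-sided inequality to a short algebraic manipulation in the single variable pair $(a,b)=(|\langle x,y\rangle|,\,\|x\|\|y\|)$, using the classical Cauchy--Schwarz inequality as the one nontrivial input. By setting $a=|\langle x,y\rangle|$ and $b=\|x\|\|y\|$, I would restate the target as the purely arithmetic assertion
\[
a^{2}\;\leq\;\frac{1}{\lambda+1}\,ab+\frac{\lambda}{\lambda+1}\,b^{2}\;\leq\;b^{2},\qquad \text{for } a,b\geq 0,\ a\leq b,\ \lambda\geq 0.
\]

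For the left inequality, I would clear the denominator $\lambda+1$ to get the equivalent form $(\lambda+1)a^{2}\leq ab+\lambda b^{2}$, then rearrange as $\lambda(a^{2}-b^{2})+(a^{2}-ab)\leq 0$, and finally factor out $(a-b)$ to obtain
\[
(a-b)\bigl[\lambda(a+b)+a\bigr]\leq 0.
\]
Here the first factor is non-positive by Cauchy--Schwarz and the bracketed factor is non-negative because $\lambda,a,b\geq 0$, so the inequality holds. For the right inequality, I would observe that Cauchy--Schwarz also gives $ab\leq b^{2}$, so the middle expression is a convex combination of two quantities each bounded above by $b^{2}$, hence bounded above by $b^{2}$ itself.

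I do not anticipate any genuine obstacle: the entire argument is a one-line factorization once $a,b$ are introduced, with Cauchy--Schwarz providing both the order relation $a\leq b$ and the non-negativity that makes the bracketed factor work. The only conceptual remark worth recording is that the refinement interposes the hybrid quantity $|\langle x,y\rangle|\|x\|\|y\|$ between $|\langle x,y\rangle|^{2}$ and $\|x\|^{2}\|y\|^{2}$, producing a one-parameter family of intermediate bounds; this tunability, rather than any depth in the proof, is what makes the lemma useful for the numerical-radius applications that follow.
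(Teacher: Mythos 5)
Your argument is correct and is essentially the paper's own approach: the paper proves the generalized version (Lemma~\ref{gen cauchy}, with $f(t)$ in place of $\lambda$) by adding the nonnegative quantity $f(t)\left(\|x\|^2\|y\|^2 - |\langle x, y\rangle|^2\right)$ to the Cauchy--Schwarz bound $|\langle x, y\rangle|^2 \leq |\langle x, y\rangle|\|x\|\|y\|$ and rearranging, which is algebraically the same as your factorization $(a-b)\bigl[\lambda(a+b)+a\bigr]\leq 0$. Both proofs use Cauchy--Schwarz as the single nontrivial input, so there is nothing to add.
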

The next lemma is the operator version of the classical Jensen's inequality.
\begin{lemma}\label{jenson} \cite{Res}
	Let $T \in \mathcal{B}(\mathcal{H})$ be a self-adjoint operator whose spectrum contained in the interval $J$, and let $x \in \mathcal{H}$ be a unit vector. If $h$ is a convex function on $J$, then \[h(\langle Tx, x \rangle) \leq \langle h(T)x, x \rangle. \]
\end{lemma}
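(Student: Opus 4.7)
The natural strategy is to reduce the operator statement to the scalar Jensen inequality via the spectral theorem. First I would use the spectral decomposition $T = \int_{J} \lambda \, dE(\lambda)$ of the self-adjoint operator $T$; for a unit vector $x$, the Borel set function $\mu_x(B) := \langle E(B)x, x \rangle$ is a probability measure on $J$, since $E(B)$ is an orthogonal projection (giving positivity) and $\mu_x(J) = \langle x, x \rangle = 1$. By the Borel functional calculus, one then has $\langle Tx, x \rangle = \int_{J} \lambda \, d\mu_x(\lambda)$ and $\langle h(T)x, x \rangle = \int_{J} h(\lambda) \, d\mu_x(\lambda)$.

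The main step is to invoke the integral form of the classical Jensen inequality: for any probability measure $\mu$ on $J$ and any convex $h$ on $J$, one has $h\!\left(\int_{J} \lambda \, d\mu\right) \leq \int_{J} h(\lambda) \, d\mu$. Combining this with the two spectral identifications yields the claimed inequality at once.

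As a more elementary alternative that bypasses measure-theoretic machinery, I would use the support-line property of convex functions. Setting $\alpha = \langle Tx, x \rangle$, note that $\alpha \in J$ because the numerical range of a self-adjoint operator lies in the convex hull of its spectrum, which is contained in $J$. There exist real numbers $a, b$ with $h(\alpha) = a\alpha + b$ and $h(\lambda) \geq a\lambda + b$ for all $\lambda \in J$. The nonnegativity of $h(\lambda) - a\lambda - b$ on $\sigma(T)$ gives, via the functional calculus, $h(T) \geq aT + bI$; pairing with $x$ then yields $\langle h(T)x, x \rangle \geq a\alpha + b = h(\alpha)$.

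The argument presents no serious obstacle for this classical result. The only subtlety worth flagging is the need to ensure that $\alpha = \langle Tx, x \rangle$ lies in the domain $J$ of $h$ (so that the supporting line, or scalar Jensen, can be applied at $\alpha$), and this is guaranteed precisely by the containment of the numerical range of a self-adjoint operator in the convex hull of its spectrum.
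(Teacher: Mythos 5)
Your proposal is correct. Note that the paper itself offers no proof of this lemma --- it is imported verbatim from Kittaneh's paper \cite{Res} as a known auxiliary result --- so there is no in-paper argument to compare against; what you have written is a complete, self-contained justification of the cited fact. Both of your routes are sound: the spectral-measure reduction to the integral form of the scalar Jensen inequality is the textbook proof, and the supporting-line argument is a clean, more elementary alternative that avoids measure theory entirely (at the mild cost of invoking the existence of a supporting affine function at an interior point, which is where convexity enters). You correctly identify the one genuine subtlety, namely that $\langle Tx, x\rangle$ must lie in $J$; since $\sigma(T)\subseteq J$ and $J$ is an interval, the closed convex hull $[\min\sigma(T),\max\sigma(T)]$ of the spectrum is contained in $J$, and the numerical range of a self-adjoint operator lies in that hull, so the point of evaluation is admissible. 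Either version of your argument would serve as a valid proof of the lemma.
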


Now, we will demonstrate the following improvement of the Cauchy-Schwarz inequality, which will be utilized frequently. 

\begin{lemma}\label{gen cauchy}
Let $f: (0,1) \rightarrow \mathbb{R}^+$ be a well-defined function. Then, for any $x, y \in \mathcal{H},$ \[|\langle x, y \rangle|^2 \leq \frac{f(t)}{1+f(t)} \|x\|^2 \|y\|^2  + \frac{1}{1+ f(t)} |\langle x, y \rangle | \|x\|\|y\| \leq \|x\|^2 \|y\|^2 .\] 
\end{lemma}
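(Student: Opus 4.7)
The plan is to observe that Lemma \ref{gen cauchy} is essentially a restatement of Lemma \ref{cauchyimp} (Al-Dolat et al.) with the nonnegative parameter $\lambda$ replaced by the positive value $f(t)$. Since $f: (0,1) \to \mathbb{R}^+$ assigns a positive real number $f(t)$ for each $t \in (0,1)$, one may directly substitute $\lambda = f(t)$ into Lemma \ref{cauchyimp} and obtain the stated double inequality. Thus the lemma requires no new idea; its sole novelty is the introduction of a functional parameter that will later be optimized over $t \in (0,1)$ in the numerical radius applications.

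For completeness, I would also sketch a direct algebraic proof. The first step is to set $a = |\langle x, y \rangle|$ and $b = \|x\|\|y\|$. The classical Cauchy-Schwarz inequality yields $0 \leq a \leq b$. The right-hand inequality then follows immediately: since $ab \leq b^2$,
\[
\frac{f(t)}{1+f(t)} b^2 + \frac{1}{1+f(t)} ab \;\leq\; \frac{f(t)}{1+f(t)} b^2 + \frac{1}{1+f(t)} b^2 \;=\; b^2.
\]

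For the left-hand inequality, clearing the denominator $1+f(t) > 0$ reduces the claim to $(1+f(t))a^2 \leq f(t) b^2 + ab$, which rearranges to $a(a-b) \leq f(t)(b-a)(b+a)$. Since $b - a \geq 0$ and $f(t)(b+a) \geq 0 \geq -a$, this is immediate (and becomes equality precisely when $a = b$). There is no genuine obstacle: the only subtlety is remembering that the strict positivity of $f(t)$ ensures $1+f(t) \neq 0$ so that division is legitimate, but this is part of the hypothesis. Consequently, the whole proof can be compressed to a single sentence invoking Lemma \ref{cauchyimp} with $\lambda = f(t)$.
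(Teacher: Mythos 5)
Your proposal is correct and, in its direct algebraic form, is essentially the same argument as the paper's: the paper adds the nonnegative quantity $f(t)\left(\|x\|^2\|y\|^2-|\langle x,y\rangle|^2\right)$ to the Cauchy--Schwarz bound $|\langle x,y\rangle|^2\leq|\langle x,y\rangle|\,\|x\|\|y\|$ and solves for $|\langle x,y\rangle|^2$, which is exactly your ``clear the denominator and rearrange'' computation with $a=|\langle x,y\rangle|$, $b=\|x\|\|y\|$. Your alternative one-line derivation --- substituting $\lambda=f(t)$ into Lemma~\ref{cauchyimp} --- is also logically valid, since the lemma is indeed that inequality with a functional parameter.
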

\begin{proof}
Employing the Cauchy-Schwarz inequality, we obtain 
 \begin{eqnarray*}
 	|\langle x, y \rangle|^2 &\leq& 	|\langle x, y \rangle| \|x\| \|y\|\\
 	&\leq& 	|\langle x, y \rangle| \|x\| \|y\| + f(t) \left(\|x\|^2 \|y\|^2 -	|\langle x, y \rangle|^2 \right)\\
 	\Rightarrow 	|\langle x, y \rangle|^2 &\leq& \frac{f(t)}{1+f(t)} \|x\|^2 \|y\|^2  + \frac{1}{1+ f(t)} |\langle x, y \rangle | \|x\|\|y\|.
 \end{eqnarray*}  
This establishes the first inequality, and the second inequality follows directly from the Cauchy-Schwarz inequality.
\end{proof}

By setting $f(t)=t$ and $t = \lambda$ in Lemma \ref{gen cauchy}, we derive the following corollary.
\begin{cor}
	Let $x, y \in \mathcal{H}$ and $\lambda \in (0,1),$ then 
	\[|\langle x, y \rangle|^2 \leq \frac{1}{\lambda+1} \|x\|\|y\||\langle x, y\rangle| + \frac{\lambda}{\lambda + 1}\|x\|^2\|y\|^2. \]
\end{cor}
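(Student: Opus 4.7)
The plan is to obtain this corollary as a direct specialization of Lemma \ref{gen cauchy}. Since that lemma is parameterized by an arbitrary positive-valued function $f$ on $(0,1)$, I would pick the simplest admissible choice, namely $f(t) = t$, which is clearly well-defined and strictly positive on $(0,1)$.

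With this choice, the two coefficients $\frac{f(t)}{1+f(t)}$ and $\frac{1}{1+f(t)}$ appearing in Lemma \ref{gen cauchy} become $\frac{t}{1+t}$ and $\frac{1}{1+t}$, respectively. Substituting into the lemma's inequality yields, for every $t \in (0,1)$,
\[|\langle x, y\rangle|^2 \leq \frac{t}{1+t}\|x\|^2\|y\|^2 + \frac{1}{1+t}|\langle x, y\rangle|\|x\|\|y\|.\]
Relabeling the dummy parameter $t$ as $\lambda$ then reproduces the displayed inequality verbatim, with the coefficient $\frac{\lambda}{\lambda+1}$ attached to $\|x\|^2\|y\|^2$ and $\frac{1}{\lambda+1}$ attached to the mixed term $|\langle x, y\rangle|\|x\|\|y\|$, exactly as stated.

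There is essentially no obstacle here: the corollary is nothing more than a rewriting of Lemma \ref{gen cauchy} with a particular $f$ and a renaming of variable. The one point worth flagging is the relation to the earlier literature: this specialization recovers Lemma \ref{cauchyimp} of Al-Dolat et al.\ \cite{filomat}, restricted to the range $\lambda\in (0,1)$, and so exhibits their improved Cauchy--Schwarz inequality as a single instance of the broader family encoded in Lemma \ref{gen cauchy}.
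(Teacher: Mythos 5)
Your proposal is correct and matches the paper's own derivation exactly: the paper obtains this corollary by setting $f(t)=t$ and $t=\lambda$ in Lemma \ref{gen cauchy}. Your added remark connecting the result to Lemma \ref{cauchyimp} of \cite{filomat} is also consistent with the paper's surrounding discussion.
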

By setting $f(t) = \frac{t}{1-t}$ in the Lemma \ref{gen cauchy}, we obtain the following corollary. 
\begin{cor} \cite {Alomari Ricerche}
	Let $x, y \in \mathcal{H},$ and $\theta \in (0,1),$ then,
	\[|\langle x, y \rangle|^2  \leq t\|x\|^2 \|y\|^2 + (1-t) |\langle x, y \rangle| \|x\|\|y\| .\]
\end{cor}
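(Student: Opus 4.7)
The plan is to treat $f(t)$ as a non-negative weight and inflate the standard Cauchy--Schwarz inequality by a non-negative slack term. The starting point is the elementary observation that multiplying $|\langle x,y\rangle|\leq \|x\|\,\|y\|$ through by $|\langle x,y\rangle|$ gives $|\langle x,y\rangle|^2 \leq |\langle x,y\rangle|\,\|x\|\,\|y\|$. Since the Cauchy--Schwarz defect $\|x\|^2\|y\|^2 - |\langle x,y\rangle|^2$ is non-negative and $f(t)>0$, one may add $f(t)$ times this defect to the right-hand side without spoiling the inequality, producing
\[|\langle x,y\rangle|^2 \leq |\langle x,y\rangle|\,\|x\|\,\|y\| + f(t)\bigl(\|x\|^2\|y\|^2 - |\langle x,y\rangle|^2\bigr).\]

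Next I would isolate $|\langle x,y\rangle|^2$ by moving the $-f(t)|\langle x,y\rangle|^2$ term to the left and dividing through by $1+f(t)>0$. This produces exactly the middle expression of the claim, namely $\frac{f(t)}{1+f(t)}\|x\|^2\|y\|^2 + \frac{1}{1+f(t)}|\langle x,y\rangle|\,\|x\|\,\|y\|$. For the right-hand inequality in the chain, I would invoke Cauchy--Schwarz one more time to replace $|\langle x,y\rangle|\,\|x\|\,\|y\|$ by $\|x\|^2\|y\|^2$; since the coefficients $\frac{f(t)}{1+f(t)}$ and $\frac{1}{1+f(t)}$ form a convex combination summing to $1$, the whole expression collapses to $\|x\|^2\|y\|^2$.

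There is essentially no obstacle here: the argument is a one-line algebraic rearrangement of Cauchy--Schwarz in which the parameter $t$ plays a purely formal role. The only point worth noting is that positivity of $f(t)$ is used both for non-negativity of the added slack and for the sign of $1+f(t)$ when dividing, neither of which is delicate. The statement is really a template saying that \emph{any} positive weight applied to the Cauchy--Schwarz defect yields a valid interpolation between $|\langle x,y\rangle|\,\|x\|\,\|y\|$ and $\|x\|^2\|y\|^2$, and the freedom in choosing $f$ is what will be exploited in the subsequent numerical radius applications.
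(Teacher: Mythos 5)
Your argument is correct and is essentially the paper's own route: the paper proves the general inequality $|\langle x,y\rangle|^2 \leq \frac{f(t)}{1+f(t)}\|x\|^2\|y\|^2 + \frac{1}{1+f(t)}|\langle x,y\rangle|\,\|x\|\,\|y\|$ by exactly your slack-adding rearrangement of Cauchy--Schwarz (Lemma \ref{gen cauchy}) and then deduces this corollary by substituting $f(t)=\frac{t}{1-t}$. The only step you leave implicit is that final specialization, which converts $\frac{f(t)}{1+f(t)}$ into $t$ and $\frac{1}{1+f(t)}$ into $1-t$ so that your middle expression actually matches the stated coefficients; with that one line added, your proof coincides with the paper's.
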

Our first theorem provides an upper bound for the numerical radius of the product of two operators.
\begin{theorem}\label{th2}
	Let $T, S \in \mathcal{B}(\mathcal{H}),$ and $f: (0,1) \rightarrow \mathbb{R}^+$ be a well-defined function. Then for $r \geq 1,$  
	\begin{eqnarray*}
	w^{2r}(T^*S) &\leq& \frac{1}{2(1+ f(t))} w^r (T^*S)  \left\||T|^{2r} + |S|^{2r}\right\| +  \frac{f(t)}{4(1+ f(t))} \left\||T|^{4r} + |S|^{4r}\right\| \\ && + \frac{f(t)}{2(1+ f(t))}  w\left( |S|^{2r} |T|^{2r}\right).
	\end{eqnarray*}
\end{theorem}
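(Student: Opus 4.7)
The plan is to fix a unit vector $x \in \mathcal{H}$ and estimate $|\langle T^*Sx,x\rangle|^{2r} = |\langle Sx, Tx\rangle|^{2r}$, then take the supremum at the end. The first step is to derive a power-$r$ analog of Lemma \ref{gen cauchy}, mimicking its proof: raise the Cauchy-Schwarz inequality to the $r$-th power to get $|\langle Sx,Tx\rangle|^r \leq \|Sx\|^r\|Tx\|^r$, multiply through by $|\langle Sx,Tx\rangle|^r$, and then add the nonnegative slack $f(t)\bigl(\|Sx\|^{2r}\|Tx\|^{2r} - |\langle Sx,Tx\rangle|^{2r}\bigr)$ to the right-hand side. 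Rearranging yields
\[
|\langle Sx,Tx\rangle|^{2r} \leq \frac{1}{1+f(t)} |\langle Sx,Tx\rangle|^r \|Sx\|^r \|Tx\|^r + \frac{f(t)}{1+f(t)} \|Sx\|^{2r}\|Tx\|^{2r}.
\]

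Next, I would handle the two factors separately in terms of operator quantities. For the $\|Sx\|^r\|Tx\|^r$ factor, AM-GM gives $\|Sx\|^r\|Tx\|^r \leq \tfrac{1}{2}(\|Sx\|^{2r} + \|Tx\|^{2r})$, and applying Lemma \ref{positive op} to the positive operators $|S|^2$ and $|T|^2$ with exponent $r$ converts $\|Sx\|^{2r} = \langle |S|^2 x,x\rangle^r$ to $\langle |S|^{2r} x, x\rangle$ (similarly for $T$), producing the bound $\tfrac{1}{2}\langle (|S|^{2r}+|T|^{2r})x,x\rangle$.

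The key step is treating $\|Sx\|^{2r}\|Tx\|^{2r}$. Lemma \ref{positive op} first gives $\|Sx\|^{2r}\|Tx\|^{2r} \leq \langle |S|^{2r}x,x\rangle \langle x, |T|^{2r}x\rangle$. Then I invoke Buzano's inequality (Lemma \ref{buzano}) with unit vector $e = x$ and the vectors $|S|^{2r}x$ and $|T|^{2r}x$, which bounds the product above by $\tfrac{1}{2}\bigl(\||S|^{2r}x\|\||T|^{2r}x\| + |\langle |T|^{2r}|S|^{2r}x,x\rangle|\bigr)$. A further AM-GM step on the first summand, combined once more with Lemma \ref{positive op} to dominate $\||S|^{2r}x\|^2 = \langle |S|^{4r}x,x\rangle$ (and similarly for $T$), delivers
\[
\|Sx\|^{2r}\|Tx\|^{2r} \leq \tfrac{1}{4}\langle (|S|^{4r}+|T|^{4r})x,x\rangle + \tfrac{1}{2}|\langle |T|^{2r}|S|^{2r}x,x\rangle|.
\]

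Substituting both estimates into the first displayed inequality gives a pointwise bound on $|\langle Sx,Tx\rangle|^{2r}$ in terms of $|\langle Sx,Tx\rangle|^r \langle (|S|^{2r}+|T|^{2r})x,x\rangle$, $\langle (|S|^{4r}+|T|^{4r})x,x\rangle$, and $|\langle |T|^{2r}|S|^{2r}x,x\rangle|$, with precisely the stated coefficients $\frac{1}{2(1+f(t))}$, $\frac{f(t)}{4(1+f(t))}$, $\frac{f(t)}{2(1+f(t))}$. Taking the supremum over unit $x$ of each term separately (each term being bounded pointwise by its supremum) produces the three summands in the theorem, using $w(|T|^{2r}|S|^{2r}) = w(|S|^{2r}|T|^{2r})$ which holds because these operators are adjoints of each other. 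The main obstacle is the bookkeeping of constants: the order in which Buzano and the two AM-GM steps are applied is what makes the $\tfrac{1}{2}$ and $\tfrac{1}{4}$ factors line up correctly with $\frac{f(t)}{1+f(t)}$ to reproduce the stated form.
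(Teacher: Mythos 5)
Your proof is correct and follows essentially the same route as the paper's: the same decomposition into three terms via AM--GM, Lemma \ref{positive op}, and Buzano's inequality (Lemma \ref{buzano}), arriving at the same coefficients before taking the supremum termwise. The only difference is cosmetic: where you rerun the slack-adding trick directly at power $r$ to obtain the intermediate inequality, the paper applies Lemma \ref{gen cauchy} at power $2$ and then uses convexity of $t \mapsto t^r$ to distribute the exponent --- both yield the identical displayed bound.
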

\begin{proof}
	Let $x \in \mathcal{H}$ with $\|x\|=1.$ Then we have 
\begin{eqnarray*}
	\left|\langle T^*Sx, x \rangle \right|^{2r} &=& \left|\langle Tx, Sx \rangle \right|^{2r}\\
	&\leq& \left( \frac{1}{(1+ f(t))}  \|Tx\|\|Sx\| |\langle Tx, Sx \rangle| +  \frac{f(t)}{(1+ f(t))} \|Tx\|^2\|Sx\|^2 \right)^r\\&&\,\,\,\,\,\,\,\,\,\,\,\,\,\,\,\,\,\,\,\,\,\,\,\,\,\,\,\,\,\,\,\,\,\,\,\,\,\,\,\,\,\,\,\,\,\,\,\,\,\,\,\,\,\,\,\,\,\,\,\,\,\,\,\,\,\,\,\,\,\,\,\,\,\,\,\,\,\,\,\,\,\,\,\,\,\,\,\,\,\,\,\,\,\,\,\,\,\,\,\,\,\,\,\,\,\, (\mbox{using Lemma \ref{gen cauchy}})\\
	&\leq& \frac{1}{(1+ f(t))}  \|Tx\|^{r} \|Sx\|^r |\langle T^*Sx, x \rangle|^r + \frac{f(t)}{(1+ f(t))} \|Tx\|^{2r} \|Sx\|^{2r}\\
	&&\,\,\,\,\,\,\,\,\,\,\,\,\,\,\,\,\,\,\,\,\,\,\,\,\,\,\,\,\,\,\,\,\,\,\,\,\,\,\,\,\,\,\,\,\,\,\,\,~(\mbox{using the convexity of the function $f(t) = t^r$ })\\
	&\leq& \frac{1}{2(1+ f(t))} \left\langle \left(|T|^{2r}+ |S|^{2r} \right)x, x \right\rangle \left|\langle T^*Sx, x \rangle \right|^r\\&& + \frac{f(t)}{(1+ f(t))} \langle |T|^{2r}x, x \rangle \langle x, |S|^{2r}x \rangle\\ 	&&\,\,\,\,\,\,\,\,\,\,\,\,\,\,\,\,\,\,\,\,\,\,\,\,\,\,\,\,\,\,\,\,\,\,\,\,\,\,\,\,\,\,\,\,\,\,\, ~(\mbox{using A.M-G.M inequality and Lemma \ref{positive op}})\\ 
	&\leq& \frac{1}{2(1+ f(t))}  \left\langle \left(|T|^{2r}+ |S|^{2r} \right)x, x \right\rangle \left|\langle T^*Sx, x \rangle \right|^r \\&&+ \frac{f(t)}{2(1+ f(t))} \left(\left\||T|^{2r}x\right\|\left\||S|^{2r}x \right\| + \left\langle |T|^{2r}x, |S|^{2r}x \right\rangle  \right)~(\mbox{using Lemma \ref{buzano}})\\
	&\leq& \frac{1}{2(1+ f(t))}  \left\langle \left(|T|^{2r}+ |S|^{2r} \right)x, x \right\rangle \left|\langle T^*Sx, x \rangle \right|^r \\&&+ \frac{f(t)}{4(1+ f(t))}  \left\langle \left(|T|^{4r} + |S|^{4r}  \right)x, x \right\rangle + \frac{f(t)}{2(f(t)+1)} \left\langle \left(|S|^{2r} |T|^{2r}\right)x, x \right\rangle\\
	&\leq& \frac{1}{2(1+ f(t))} w^r (T^*S)  \left\||T|^{2r} + |S|^{2r}\right\| +  \frac{f(t)}{4(1+ f(t))} \left\||T|^{4r} + |S|^{4r}\right\| \\ && + \frac{f(t)}{2(1+ f(t))}  w\left( |S|^{2r} |T|^{2r}\right).
\end{eqnarray*}
Now, by taking supremum over $x$ with $\|x\|=1,$  we obtain our desired inequality.
\end{proof}
Now, we present several corollaries that can be derived from Theorem \ref{th2}.
Our first corollary is an improvement of inequality (\ref{eq dragomir}).
\begin{cor}
	Let $T, S \in \mathcal{B}(\mathcal{H}),$ and $f: (0,1) \rightarrow \mathbb{R}^+$ be a well-defined function.  Then for any  $r \geq 1,$
	\begin{eqnarray*}
		w^{2r}(T^*S) &\leq& \frac{1}{2(f(t)+1)}\left\||T|^{2r} + |S|^{2r}\right\|w^r(T^*S) + \frac{f(t)}{4(f(t)+1)} \left\||T|^{4r} + |S|^{4r}\right\|\\&&+ \frac{f(t)}{2(f(t)+1)}w\left(|S|^{2r}|T|^{2r} \right) \\
		&\leq& \frac{1}{2} \left\||T|^{4r} + |S|^{4r} \right\|.
	\end{eqnarray*}
\end{cor}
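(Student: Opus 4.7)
The first inequality in the statement is exactly the conclusion of Theorem \ref{th2}, so the task reduces to dominating the right-hand side of Theorem \ref{th2} by $\tfrac{1}{2}\||T|^{4r}+|S|^{4r}\|$. The plan is to bound each of the three summands separately in terms of $\||T|^{4r}+|S|^{4r}\|$ and then verify that the three coefficients add up to exactly $\tfrac{1}{2}$.

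First, I would apply the Dragomir inequality (\ref{eq dragomir}) to the factor $w^r(T^*S)$ appearing in the first summand, getting $w^r(T^*S)\le \tfrac{1}{2}\||T|^{2r}+|S|^{2r}\|$. This converts the first summand into a multiple of $\||T|^{2r}+|S|^{2r}\|^2$. To pass from the square of $\||T|^{2r}+|S|^{2r}\|$ to $\||T|^{4r}+|S|^{4r}\|$, I would invoke Lemma \ref{convex op} with $A=|T|^{2r}$, $B=|S|^{2r}$ and exponent $2$; together with the identity $\|P^2\|=\|P\|^2$ for positive operators $P$, this yields $\||T|^{2r}+|S|^{2r}\|^2\le 2\||T|^{4r}+|S|^{4r}\|$. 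Hence the first summand is at most $\tfrac{1}{2(f(t)+1)}\||T|^{4r}+|S|^{4r}\|$.

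Next, I would attack the third summand by writing $|S|^{2r}|T|^{2r}=(|S|^{2r})^{*}|T|^{2r}$ and applying Dragomir's inequality once more with $r=1$, $S\mapsto |S|^{2r}$, $T\mapsto |T|^{2r}$; since $(|S|^{2r})^{*}|S|^{2r}=|S|^{4r}$ and likewise for $T$, this produces $w(|S|^{2r}|T|^{2r})\le \tfrac{1}{2}\||T|^{4r}+|S|^{4r}\|$, so the third summand is at most $\tfrac{f(t)}{4(f(t)+1)}\||T|^{4r}+|S|^{4r}\|$. The middle summand already has the shape $\tfrac{f(t)}{4(f(t)+1)}\||T|^{4r}+|S|^{4r}\|$ with no further work needed. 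Summing the three coefficients gives $\tfrac{1}{2(f(t)+1)}+\tfrac{f(t)}{4(f(t)+1)}+\tfrac{f(t)}{4(f(t)+1)}=\tfrac{2+2f(t)}{4(f(t)+1)}=\tfrac{1}{2}$, as required.

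No single step looks like a serious obstacle; the proof is essentially bookkeeping built on top of Dragomir's inequality used at two different exponents. The only place demanding mild care is the operator-norm step $\||T|^{2r}+|S|^{2r}\|^{2}\le 2\||T|^{4r}+|S|^{4r}\|$, because Lemma \ref{convex op} is phrased in terms of $h\bigl(\tfrac{A+B}{2}\bigr)$ rather than $(A+B)^{2}$, so one has to exploit the positivity of $|T|^{2r}+|S|^{2r}$ to replace $\|A+B\|^{2}$ by $\|(A+B)^{2}\|$ before the lemma applies.
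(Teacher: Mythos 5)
Your proposal is correct and follows essentially the same route as the paper: the first line is Theorem \ref{th2}, then Dragomir's inequality (\ref{eq dragomir}) is applied both to $w^r(T^*S)$ and to $w(|S|^{2r}|T|^{2r})$, and Lemma \ref{convex op} converts $\||T|^{2r}+|S|^{2r}\|^2$ into $2\||T|^{4r}+|S|^{4r}\|$, after which the coefficients sum to $\tfrac12$. Your remark about needing $\|(A+B)^2\|=\|A+B\|^2$ for positive operators before invoking Lemma \ref{convex op} is a correct and slightly more careful reading of that step than the paper gives.
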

\begin{proof}
	Using inequality (\ref{eq dragomir}), we obtain
	\begin{eqnarray*}
		w^{2r}(T^*S) &\leq& \frac{1}{2(f(t)+1)}\left\||T|^{2r} + |S|^{2r}\right\|w^r(T^*S) + \frac{f(t)}{4(f(t)+1)} \left\||T|^{4r} + |S|^{4r}\right\|\\&&+ \frac{f(t)}{2(f(t)+1)}w\left(|S|^{2r}|T|^{2r} \right) \\
		&\leq& 	 \frac{1}{4(f(t)+1)}\left\||T|^{2r} + |S|^{2r}\right\|^2 + \frac{f(t)}{4(f(t)+1)} \left\||T|^{4r} + |S|^{4r}\right\|\\&&+ \frac{f(t)}{4(f(t)+1)} \left\||T|^{4r} + |S|^{4r} \right\|~(\mbox{using inequality (\ref{eq dragomir})})\\
		&\leq&  \frac{1}{2(f(t)+1)}\left\||T|^{4r} + |S|^{4r}\right\| + \frac{f(t)}{4(f(t)+1)} \left\||T|^{4r} + |S|^{4r}\right\|\\&&+ \frac{f(t)}{4(f(t)+1)} \left\||T|^{4r} + |S|^{4r} \right\|~(\mbox{using Lemma \ref{convex op}})\\
		&=& \frac{1}{2} \left\| |T|^{4r} + |S|^{4r}\right\|.
	\end{eqnarray*}
\end{proof}	
\begin{cor} \cite{Alomari Ricerche}
	Let $T, S \in \mathcal{B}(\mathcal{H}).$ Then for any $t \in (0,1)$ and $r \geq 1,$ \[ w^{2r}(S^*T) \leq \frac{1-t}{2}\left\||T|^{2r} + |S|^{2r}\right\|w^r(T^*S) + \frac{t}{2} \left\||T|^{4r} + |S|^{4r}\right\|.\]
\end{cor}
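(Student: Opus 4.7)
The plan is to obtain this corollary as a direct specialization of Theorem \ref{th2}, combined with Dragomir's inequality (\ref{eq dragomir}) applied to the extra $w$-term that Theorem \ref{th2} produces.

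First I would choose $f(t)=\frac{t}{1-t}$ in Theorem \ref{th2}. A quick computation gives $1+f(t)=\frac{1}{1-t}$, so $\frac{1}{1+f(t)}=1-t$ and $\frac{f(t)}{1+f(t)}=t$. Substituting these into the conclusion of Theorem \ref{th2} yields
\[
w^{2r}(T^{*}S) \leq \frac{1-t}{2}\bigl\||T|^{2r}+|S|^{2r}\bigr\|\,w^{r}(T^{*}S) + \frac{t}{4}\bigl\||T|^{4r}+|S|^{4r}\bigr\| + \frac{t}{2}\,w\bigl(|S|^{2r}|T|^{2r}\bigr).
\]
Since $w(S^{*}T)=w((S^{*}T)^{*})=w(T^{*}S)$, the left side matches the one in the statement, and only the third term on the right needs to be absorbed.

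Next I would bound $w\bigl(|S|^{2r}|T|^{2r}\bigr)$ using inequality (\ref{eq dragomir}). Apply (\ref{eq dragomir}) with $r$ replaced by $1$ and with the operators $T$ and $S$ there taken to be the positive operators $|T|^{2r}$ and $|S|^{2r}$ (so $S^{*}=|S|^{2r}$). This gives $w(|S|^{2r}|T|^{2r}) \leq \tfrac{1}{2}\bigl\||T|^{4r}+|S|^{4r}\bigr\|$, hence
\[
\frac{t}{2}\,w\bigl(|S|^{2r}|T|^{2r}\bigr) \leq \frac{t}{4}\bigl\||T|^{4r}+|S|^{4r}\bigr\|.
\]
Adding this to the coefficient $\tfrac{t}{4}$ already present in front of $\bigl\||T|^{4r}+|S|^{4r}\bigr\|$ collapses the two terms into a single $\tfrac{t}{2}\bigl\||T|^{4r}+|S|^{4r}\bigr\|$, which yields the claimed inequality.

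I do not anticipate a substantial obstacle: the entire argument is a clean specialization followed by one application of an earlier inequality. The only thing requiring any care is verifying that $f(t)=t/(1-t)$ is indeed a well-defined positive function on $(0,1)$ (so that Theorem \ref{th2} applies) and noting the equality $w(S^{*}T)=w(T^{*}S)$ so that the displayed inequality matches the statement exactly.
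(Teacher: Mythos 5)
Your proposal is correct and follows exactly the paper's own route: the paper likewise sets $f(t)=\tfrac{t}{1-t}$ in Theorem \ref{th2} and then invokes inequality (\ref{eq dragomir}) to absorb the term $\tfrac{t}{2}\,w(|S|^{2r}|T|^{2r})$ into $\tfrac{t}{4}\left\||T|^{4r}+|S|^{4r}\right\|$, giving the stated coefficient $\tfrac{t}{2}$. Your write-up simply makes explicit the arithmetic that the paper leaves to the reader.
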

\begin{proof}
By setting $f(t) = \frac{t}{1-t}$ and utilizing inequality (\ref{eq dragomir}), we derive the previously stated inequality. Therefore, it's evident that our theorem in Theorem \ref{th2} generalizes and improves \cite[Th. 1]{Alomari Ricerche} for $t \in (0,1)$.
\end{proof}
\begin{cor} \cite{Kittaneh MIA}
Let 	$T, S \in \mathcal{B}(\mathcal{H}).$ Then  \[ w^{2}(S^*T) \leq \frac{1}{3}\left\||T|^{2} + |S|^{2}\right\|w(T^*S) + \frac{1}{6} \left\||T|^{4} + |S|^{4}\right\|.\]
\end{cor}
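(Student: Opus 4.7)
The plan is to obtain this corollary by specializing Theorem \ref{th2} and then eliminating the residual $w(|S|^{2}|T|^{2})$ term with the Dragomir bound (\ref{eq dragomir}). First I would set $r=1$ in Theorem \ref{th2}, which leaves a free parameter $f(t)$ and the estimate
\[
w^{2}(T^{*}S)\le \frac{w(T^{*}S)\,\||T|^{2}+|S|^{2}\|}{2(1+f(t))} + \frac{f(t)\,\||T|^{4}+|S|^{4}\|}{4(1+f(t))} + \frac{f(t)\,w(|S|^{2}|T|^{2})}{2(1+f(t))}.
\]
To match the target coefficient $\tfrac{1}{3}$ in front of $w(T^{*}S)\,\||T|^{2}+|S|^{2}\|$ I would take the constant choice $f(t)\equiv \tfrac{1}{2}$, which is admissible since Lemma \ref{gen cauchy} only requires a positive well-defined function on $(0,1)$. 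This choice turns the three coefficients into $\tfrac{1}{3}$, $\tfrac{1}{12}$, and $\tfrac{1}{6}$.

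The remaining task is to handle $w(|S|^{2}|T|^{2})$. Here I would apply inequality (\ref{eq dragomir}) to the positive operators $|T|^{2}$ and $|S|^{2}$ (with $r=1$), which immediately yields
\[
w(|S|^{2}|T|^{2})\le \tfrac{1}{2}\,\||T|^{4}+|S|^{4}\|.
\]
Substituting this back and combining the two $\||T|^{4}+|S|^{4}\|$-terms produces a total coefficient of $\tfrac{1}{12}+\tfrac{1}{6}\cdot\tfrac{1}{2}=\tfrac{1}{6}$, and a final use of $w(S^{*}T)=w(T^{*}S)$ (invariance of $w$ under adjoints) recasts the left-hand side in the form stated in the corollary.

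I do not expect a genuine analytical obstacle here: the only real decision is identifying the value $f(t)=\tfrac{1}{2}$ that makes the three coefficients balance cleanly. The slightly non-obvious observation is that the unwanted term $w(|S|^{2}|T|^{2})$ itself falls under the scope of (\ref{eq dragomir}) once one views $|T|^{2}$ and $|S|^{2}$ (rather than $T$ and $S$) as the two operators in Dragomir's inequality; everything else is coefficient bookkeeping.
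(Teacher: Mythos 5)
Your proposal is correct and follows essentially the same route as the paper: set $f(t)\equiv\tfrac12$ (and $r=1$) in Theorem \ref{th2}, then absorb the $w(|S|^2|T|^2)$ term via inequality (\ref{eq dragomir}) applied to the positive operators $|T|^2,|S|^2$, and use $w(S^*T)=w(T^*S)$. The coefficient bookkeeping $\tfrac1{12}+\tfrac16\cdot\tfrac12=\tfrac16$ checks out, and Lemma \ref{convex op}, which the paper also cites, is not actually needed for this particular corollary.
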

\begin{proof}
By setting $f(t) = \frac{1}{2}$ in Theorem \ref{th2} and utilizing inequalities (\ref{eq dragomir}) and Lemma (\ref{convex op}), we establish the aforementioned inequality, previously proven by Kittaneh and Moradi \cite[Th. 1]{Kittaneh MIA}. Therefore our inequality in Theorem \ref{th2} generalizes and improves the bound established in \cite[Th. 1]{Kittaneh MIA}.
\end{proof}
\begin{cor}\label{aldolat filomat}
	Let $T, S \in \mathcal{B}(\mathcal{H})$, and $t \in (0,1)$, then 
	\begin{eqnarray*}
		w^2(T^*S) &\leq& \frac{1}{2(t+1)}\left\||T|^2 + |S|^2 \right\| w(T^*S) + \frac{t}{4(t+1)} \left\| |T|^{4} + |S|^4 \right\|\\ && + \frac{t}{2(t+1)} w\left( |S|^2|T|^2\right)\\
		&\leq& \frac{1}{2(t+1)} \left\||T|^2 + |S|^2 \right\| w(T^*S) + \frac{t}{2(t+1)} \left\||T|^4 + |S|^4 \right\|.
	\end{eqnarray*}
\end{cor}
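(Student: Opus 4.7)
My plan is to recognize that Corollary \ref{aldolat filomat} is essentially the specialization of Theorem \ref{th2} to $r=1$ together with the choice $f(t)=t$, followed by one further bookkeeping inequality to pass from the first displayed line to the second.

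First I would substitute $r=1$ and take $f(t)$ to be simply $t$ in the conclusion of Theorem \ref{th2}. The right-hand side becomes exactly
\[
\frac{1}{2(t+1)}\bigl\||T|^2 + |S|^2\bigr\|\,w(T^*S) + \frac{t}{4(t+1)}\bigl\||T|^4 + |S|^4\bigr\| + \frac{t}{2(t+1)}\,w\bigl(|S|^2|T|^2\bigr),
\]
which is precisely the first inequality of the corollary. So no new work is required here beyond citing Theorem \ref{th2}; the only remark is that $f(t)=t$ is admissible as a function $(0,1)\to\mathbb{R}^+$.

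Next, for the second inequality it suffices to show
\[
w\bigl(|S|^2|T|^2\bigr) \leq \frac{1}{2}\bigl\||T|^4 + |S|^4\bigr\|,
\]
because then the third summand is dominated by half of the coefficient $\frac{t}{2(t+1)}$ of $\||T|^4+|S|^4\|$, exactly closing the gap with $\frac{t}{4(t+1)}$ in the middle summand to yield the total coefficient $\frac{t}{2(t+1)}$. To prove this single-operator bound I would apply Dragomir's inequality (\ref{eq dragomir}) with $r=1$ to the pair of operators $A:=|T|^2$ and $B:=|S|^2$. Since both are positive, $|A|=A=|T|^2$ and $|B|=B=|S|^2$, so $B^*A=|S|^2|T|^2$ and (\ref{eq dragomir}) yields $w(|S|^2|T|^2)\le\tfrac12\||T|^4+|S|^4\|$, as required.

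I do not expect any genuine obstacle here: the corollary is a direct specialization of Theorem \ref{th2}, and the only nontrivial step is the auxiliary estimate on $w(|S|^2|T|^2)$, which follows in one line from Dragomir's inequality applied to the positive operators $|T|^2$ and $|S|^2$. The entire argument should fit in a few lines of display math, with the two cited inequalities doing all the work.
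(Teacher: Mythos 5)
Your proposal is correct and matches the paper's own proof, which likewise obtains the first inequality by setting $r=1$ and $f(t)=t$ in Theorem \ref{th2} and deduces the second from inequality (\ref{eq dragomir}). You merely make explicit the step the paper leaves implicit, namely applying (\ref{eq dragomir}) to the positive operators $|T|^2$ and $|S|^2$ to get $w\bigl(|S|^2|T|^2\bigr) \leq \tfrac{1}{2}\bigl\||T|^4+|S|^4\bigr\|$, and your coefficient bookkeeping is accurate.
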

\begin{proof}
Taking $r=1$ and $f(t) =t$ in Theorem \ref{th2} yields the first inequality, with the second following from inequality (\ref{eq dragomir}). Thus, our inequality in Theorem \ref{th2} extends and enhances the inequality derived by Al-Dolat et al. \cite[Th. 2.6]{filomat} for $t \in (0,1)$.
\end{proof}

If we choose $f(t)=t$ in Theorem \ref{th2}, we derive the subsequent corollary, previously proved by Nayak \cite[Th. 2.16]{Nayak IIST} for $t \in (0,1)$.
\begin{cor} 
	Let $T, S \in \mathcal{B}(\mathcal{H}).$ Then for $t \in (0,1)$, and $r \geq 1,$
	\begin{eqnarray*}
		w^{2r}(T^*S) &\leq& \frac{1}{2(t+1)}\left\||T|^{2r} + |S|^{2r }\right\| w^r(T^*S) + \frac{t}{4(t+1)} \left\| |T|^{4r} + |S|^{4r} \right\|\\ && + \frac{t}{2(t+1)} w\left( |S|^{2r}|T|^{2r}\right).
	\end{eqnarray*}
\end{cor}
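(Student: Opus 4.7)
The plan is straightforward: this corollary is essentially a direct specialization of Theorem~\ref{th2}, as the text just before the statement signals. I would simply take the function $f\colon (0,1)\to\mathbb{R}^+$ in Theorem~\ref{th2} to be $f(t)=t$. Since $t\in(0,1)$ forces $f(t)=t>0$, this is a legitimate choice for the well-defined positive function required by Theorem~\ref{th2}.

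Having fixed $f(t)=t$, the next step is simply to rewrite the three coefficients appearing in Theorem~\ref{th2}. The factor $\frac{1}{2(1+f(t))}$ becomes $\frac{1}{2(1+t)}$, the factor $\frac{f(t)}{4(1+f(t))}$ becomes $\frac{t}{4(1+t)}$, and the factor $\frac{f(t)}{2(1+f(t))}$ becomes $\frac{t}{2(1+t)}$. Substituting these into the three-term bound
\[
w^{2r}(T^*S) \leq \tfrac{1}{2(1+f(t))} w^r(T^*S)\bigl\||T|^{2r}+|S|^{2r}\bigr\| + \tfrac{f(t)}{4(1+f(t))} \bigl\||T|^{4r}+|S|^{4r}\bigr\| + \tfrac{f(t)}{2(1+f(t))} w\bigl(|S|^{2r}|T|^{2r}\bigr)
\]
immediately yields the asserted inequality.

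There is really no obstacle here: no convexity arguments, no use of Buzano's lemma, and no appeal to the AM--GM step need to be redone, since all of that machinery is already packaged inside Theorem~\ref{th2}. The only thing worth mentioning explicitly is that the statement in Nayak~\cite{Nayak IIST} was proved directly for $t\in(0,1)$, so the present corollary also recovers it on the same parameter range; Theorem~\ref{th2} is strictly more general because it allows any positive-valued $f$ on $(0,1)$, of which $f(t)=t$ is just one choice. Hence the proof amounts to one line: \emph{Set $f(t)=t$ in Theorem~\ref{th2}}.
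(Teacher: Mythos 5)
Your proposal is correct and coincides with the paper's own justification: the corollary is obtained precisely by setting $f(t)=t$ in Theorem \ref{th2}, and your substitution of the three coefficients is accurate. Nothing further is needed.
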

\begin{theorem} \label{th3}
		Let $T \in \mathcal{B}(\mathcal{H}),$ and $g, h$ are two non-negative continuous function on $[0, \infty) $ satisfying $g(t) h(t)=t.$ Then,  for a well-defined function $f:(0,1) \rightarrow \mathbb{R}^+$ \begin{eqnarray*}
			w^2(T) &\leq& \frac{f(t)}{4(1+ f(t))} \left\|g^4(|T|) + h^4(|T^*|)\right\| + \frac{f(t)}{2(1+ f(t))}w\left( h^2(|T^*|) g^2(|T|)\right) \\&&+ \frac{1}{2(1+ f(t))} w(T) \left\|g^2(|T|) + h^2(|T^*|) \right\|.
		\end{eqnarray*}
\end{theorem}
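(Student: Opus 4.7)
The plan is to mimic the structure of the proof of Theorem \ref{th2}, but with Lemma \ref{mixed schwarz} substituting for the straightforward Cauchy--Schwarz step, since now we deal with a single operator $T$ and want expressions involving $g(|T|)$ and $h(|T^*|)$. I would fix a unit vector $x \in \mathcal{H}$ and begin with Lemma \ref{mixed schwarz} applied with $y = x$, which gives $|\langle Tx, x\rangle| \le \|g(|T|)x\|\,\|h(|T^*|)x\|$; squaring yields $|\langle Tx, x\rangle|^2 \le \|g(|T|)x\|^2\,\|h(|T^*|)x\|^2$.

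Next, I would replicate the trick used in the proof of Lemma \ref{gen cauchy}: since $\|g(|T|)x\|^2\|h(|T^*|)x\|^2 - |\langle Tx, x\rangle|^2 \ge 0$, adding $f(t)$ times this nonnegative quantity to the right-hand side of $|\langle Tx, x\rangle|^2 \le |\langle Tx, x\rangle|\,\|g(|T|)x\|\,\|h(|T^*|)x\|$ and then dividing by $1+f(t)$ produces
\[
|\langle Tx, x\rangle|^2 \le \frac{1}{1+f(t)}|\langle Tx, x\rangle|\,\|g(|T|)x\|\,\|h(|T^*|)x\| + \frac{f(t)}{1+f(t)}\|g(|T|)x\|^2\,\|h(|T^*|)x\|^2.
\]
For the first summand I would apply the AM--GM inequality to bound $\|g(|T|)x\|\,\|h(|T^*|)x\| \le \tfrac{1}{2}\langle(g^2(|T|)+h^2(|T^*|))x,x\rangle$.

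For the second summand I would rewrite $\|g(|T|)x\|^2\,\|h(|T^*|)x\|^2 = \langle g^2(|T|)x,x\rangle\,\langle x, h^2(|T^*|)x\rangle$ and invoke Buzano's inequality (Lemma \ref{buzano}) with $e = x$, obtaining an upper bound of $\tfrac{1}{2}\bigl(\|g^2(|T|)x\|\,\|h^2(|T^*|)x\| + |\langle h^2(|T^*|)g^2(|T|)x, x\rangle|\bigr)$. Another AM--GM on the first factor converts it into $\tfrac{1}{2}\langle(g^4(|T|)+h^4(|T^*|))x, x\rangle$. Finally, taking the supremum over unit vectors $x$ turns $\langle(g^2(|T|)+h^2(|T^*|))x,x\rangle$ and $\langle(g^4(|T|)+h^4(|T^*|))x,x\rangle$ into the operator norms $\|g^2(|T|)+h^2(|T^*|)\|$ and $\|g^4(|T|)+h^4(|T^*|)\|$ (using positivity so the numerical radius equals the norm), $|\langle Tx,x\rangle|$ into $w(T)$, and $|\langle h^2(|T^*|)g^2(|T|)x,x\rangle|$ into $w(h^2(|T^*|)g^2(|T|))$, yielding the claimed bound.

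The main obstacle I anticipate is keeping track of bookkeeping so that the Buzano step lands exactly the product $h^2(|T^*|)g^2(|T|)$ (rather than its reverse) and so that the AM--GM applied at the $g^2,h^2$ level produces $g^4+h^4$ as required; once the ordering of factors and squarings is done carefully, the remaining steps are essentially the same pattern as in Theorem \ref{th2}, and no further lemmas beyond \ref{mixed schwarz}, \ref{buzano}, and \ref{gen cauchy} are needed.
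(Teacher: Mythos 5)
Your proposal is correct and follows essentially the same route as the paper: the paper reaches your key intermediate inequality $|\langle Tx,x\rangle|^2 \le \frac{f(t)}{1+f(t)}\|g(|T|)x\|^2\|h(|T^*|)x\|^2 + \frac{1}{1+f(t)}|\langle Tx,x\rangle|\,\|g(|T|)x\|\,\|h(|T^*|)x\|$ by splitting $|\langle Tx,x\rangle|^2$ into two weighted copies and applying the mixed Schwarz inequality to each, which is algebraically the same as your ``add $f(t)$ times a nonnegative quantity'' trick, and then proceeds exactly as you do with Buzano's inequality and AM--GM before taking the supremum.
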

\begin{proof}
	Let $x \in \mathcal{H}.$ Then we have 
	\begin{eqnarray*}
		|\langle Tx, x \rangle|^2 &=& \frac{f(t)}{(1+ f(t))} |\langle Tx, x \rangle|^2  + \frac{1}{1+ f(t)} |\langle Tx, x \rangle|^2 \\
		&\leq& \frac{f(t)}{(1+ f(t))} \left\langle g^2(|T|)x,x \right\rangle   \left\langle h^2(|T|)x,x \right\rangle\\ && + \frac{1}{1+f(t)} |\langle Tx, x \rangle| \sqrt{\left\langle g^2(|T|)x,x \right\rangle   \left\langle h^2(|T|)x,x \right\rangle}~~(\mbox{using Lemma \ref{mixed schwarz}})\\
		&\leq& \frac{f(t)}{2(1+ f(t))} \left( \left\|g^2(|T|)x\right\|  \left\|h^2(|T|)x\right\| + \left\langle g^2(|T|)x, h^2(|T^*|)x \right\rangle  \right)\\&& + \frac{1}{1+f(t)} |\langle Tx, x \rangle| \sqrt{\left\langle g^2(|T|)x,x \right\rangle   \left\langle h^2(|T|)x,x \right\rangle}~~(\mbox{using Lemma \ref{buzano}})\\
		&\leq& \frac{f(t)}{4(1+ f(t))} \left\langle \left\{g^4(|T|) + h^4(|T^*|) \right\}x, x \right\rangle + \frac{f(t)}{2(1+ f(t))} \left\langle h^2(|T^*|) g^2(|T|)x, x \right\rangle \\ &&+ \frac{1}{2(1+f(t))} |\langle Tx, x \rangle|   \left\langle \left\{g^2(|T|) + h^2(|T^*|) \right\}x, x \right\rangle\\
	&&	\,\,\,\,\,\,\,\,\,\,\,\,\,\,\,\,\,\,\,\,\,\,\,\,\,\,\,\,\,\,\,\,\,\,\,\,\,\,\,\,\,\,\,\,\,\,\,\,\,\,\,\,\,\,\,\,\,\,\,\,\,\,\,\,\,\,\,\,\,\,\,\,\,\,\,\,\,\,\,\,\,\,\,\,\,\,\,\,\,\,\,\,\,\,\,\,\,\,\,\, (\mbox{using the A.M-G.M inequality})\\
		&\leq& \frac{f(t)}{4(1+ f(t))} \left\|g^4(|T|) + h^4(|T^*|)\right\| + \frac{f(t)}{2(1+ f(t))}w\left( h^2(|T^*|) g^2(|T|)\right) \\&&+ \frac{1}{2(1+ f(t))} w(T) \left\|g^2(|T|) + h^2(|T^*|) \right\|.
	\end{eqnarray*}
	Taking the supremum over all $x$ with $\|x\| =1,$ we attain our desired inequality.
\end{proof}
The following corollaries can be derived from Theorem \ref{th3}.

\begin{cor}
	Let $T \in \mathcal{B}(\mathcal{H}),$ and $f: (0,1) \rightarrow \mathbb{R}$ be a well-defined function. Then 
	\begin{eqnarray*}
	w^2(T) &\leq& 	\frac{f(t)}{4(1+ f(t))} \left\||T|^2 + |T^*|^2\right\| + \frac{f(t)}{2(1+ f(t))}w\left( |T^*||T|\right) \\&&+ \frac{1}{2(1+ f(t))} w(T) \left\||T| + |T^*|\right\|\\ &\leq& \frac{1}{2} \left\| |T|^2 + |T^*|^2 \right\|.
	\end{eqnarray*}
\end{cor}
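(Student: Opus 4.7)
The plan is to obtain the first inequality by specializing Theorem \ref{th3}, and then to bound each of the three resulting terms separately so that the coefficients collapse to $1/2$.

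For the first inequality, I would take $g(s)=h(s)=s^{1/2}$ in Theorem \ref{th3} (this is an admissible choice since $g$ and $h$ are non-negative continuous on $[0,\infty)$ and $g(s)h(s)=s$). Then $g^{2}(|T|)=|T|$, $h^{2}(|T^{*}|)=|T^{*}|$, $g^{4}(|T|)=|T|^{2}$, $h^{4}(|T^{*}|)=|T^{*}|^{2}$, and the bound in Theorem \ref{th3} reduces verbatim to the first inequality in the corollary.

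For the second inequality, I would estimate the three summands by a common quantity, namely $\||T|^{2}+|T^{*}|^{2}\|$. First, for $w(|T^{*}||T|)$, write $|\langle |T^{*}||T|x,x\rangle|=|\langle |T|x,|T^{*}|x\rangle|$, apply Cauchy--Schwarz and the AM--GM inequality (\ref{eq am-gm}) to get
\[
|\langle |T|x,|T^{*}|x\rangle|\leq \tfrac{1}{2}\bigl(\||T|x\|^{2}+\||T^{*}|x\|^{2}\bigr)=\tfrac{1}{2}\bigl\langle(|T|^{2}+|T^{*}|^{2})x,x\bigr\rangle,
\]
so that $w(|T^{*}||T|)\leq \tfrac{1}{2}\||T|^{2}+|T^{*}|^{2}\|$. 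Second, for the mixed term $w(T)\||T|+|T^{*}|\|$, I would apply Kittaneh's inequality (\ref{eq kittaneh}) to obtain $w(T)\||T|+|T^{*}|\|\leq \tfrac{1}{2}\||T|+|T^{*}|\|^{2}$, and then use Lemma \ref{convex op} with $r=2$ to deduce
\[
\bigl\||T|+|T^{*}|\bigr\|^{2}\leq 2\bigl\||T|^{2}+|T^{*}|^{2}\bigr\|,
\]
hence $w(T)\||T|+|T^{*}|\|\leq \||T|^{2}+|T^{*}|^{2}\|$.

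Plugging these three estimates into the first inequality gives a bound of the form $C(t)\||T|^{2}+|T^{*}|^{2}\|$, where
\[
C(t)=\frac{f(t)}{4(1+f(t))}+\frac{f(t)}{2(1+f(t))}\cdot\frac{1}{2}+\frac{1}{2(1+f(t))}=\frac{f(t)+1}{2(1+f(t))}=\frac{1}{2},
\]
which yields the second inequality. The main obstacle is identifying the right intermediate bound for $w(|T^{*}||T|)$: one needs a control by $\||T|^{2}+|T^{*}|^{2}\|$ (rather than by a product of norms), and the AM--GM step on the pointwise evaluation is what makes the coefficients align so that $C(t)$ is a function-free constant $1/2$.
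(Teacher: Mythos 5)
Your proof is correct and follows essentially the same route as the paper: specialize Theorem \ref{th3} with $g(s)=h(s)=\sqrt{s}$, control $w(T)\left\||T|+|T^*|\right\|$ via inequality (\ref{eq kittaneh}) and Lemma \ref{convex op}, and note that the coefficients sum to $\frac{1}{2}$. The only cosmetic difference is that you prove $w(|T^*||T|)\leq\frac{1}{2}\left\||T|^2+|T^*|^2\right\|$ directly by Cauchy--Schwarz and AM--GM, whereas the paper simply cites Dragomir's inequality (\ref{eq dragomir}) with $r=1$.
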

\begin{proof}
	Let us assume $g(t) = h(t) = \sqrt{t}$ in Theorem \ref{th3}. 
Then, by using the inequality (\ref{eq kittaneh}) and (\ref{eq dragomir}), we obtain 
\begin{eqnarray*}
		w^2(T) &\leq& 	\frac{f(t)}{4(1+ f(t))} \left\||T|^2 + |T^*|^2\right\| + \frac{f(t)}{2(1+ f(t))}w\left( |T^*||T|\right) \\&&+ \frac{1}{2(1+ f(t))} w(T) \left\||T| + |T^*|\right\|\\ &\leq& \frac{f(t)}{4(1+ f(t))} \left\||T|^2 + |T^*|^2\right\| +  \frac{f(t)}{4(1+ f(t))} \left\||T|^2 + |T^*|^2\right\|\\ && + \frac{1}{4(1+ f(t))}  \left\||T| + |T^*|\right\|^2\\
		 &\leq&  \frac{f(t)}{4(1+ f(t))} \left\||T|^2 + |T^*|^2\right\| +  \frac{f(t)}{4(1+ f(t))} \left\||T|^2 + |T^*|^2\right\|\\ && + \frac{1}{2(1+ f(t))}  \left\||T|^2 + |T^*|^2\right\|~~(\mbox{using Lemma \ref{convex op}})\\
		 &=& \frac{1}{2}  \left\||T|^2 + |T^*|^2\right\|.
\end{eqnarray*}
Thus, the upper bound established in Theorem \ref{th3} provides a significant improvement over the inequality (\ref{eq el kit}) when $r=1$.
\end{proof}
\begin{cor} \cite{Kittaneh MIA}
	Let $T \in \mathcal{B}(\mathcal{H}),$ then \[w^2(T) \leq \frac{1}{6} \left\||T|^2 + |T^*|^2\right\| + \frac{1}{3} w(T) \left\||T| + |T^*|\right\|. \]
\end{cor}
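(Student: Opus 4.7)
The plan is to specialize Theorem \ref{th3} in the same manner as the immediately preceding corollary, but with a specific choice of the auxiliary function $f$ that forces the constants $1/6$ and $1/3$ to appear. First I would set $g(t) = h(t) = \sqrt{t}$, which converts the ingredients of Theorem \ref{th3} into the operator-theoretic quantities appearing in the target: $g^{2}(|T|) = |T|$, $h^{2}(|T^{*}|) = |T^{*}|$, $g^{4}(|T|) = |T|^{2}$, $h^{4}(|T^{*}|) = |T^{*}|^{2}$, and $h^{2}(|T^{*}|)g^{2}(|T|) = |T^{*}||T|$.

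Next I would pick $f(t) = \tfrac{1}{2}$. Substituting into the three coefficients of Theorem \ref{th3} gives $\tfrac{f(t)}{4(1+f(t))} = \tfrac{1}{12}$, $\tfrac{f(t)}{2(1+f(t))} = \tfrac{1}{6}$, and $\tfrac{1}{2(1+f(t))} = \tfrac{1}{3}$, so the intermediate estimate becomes
\[
w^{2}(T) \leq \tfrac{1}{12}\bigl\||T|^{2} + |T^{*}|^{2}\bigr\| + \tfrac{1}{6}\, w\bigl(|T^{*}||T|\bigr) + \tfrac{1}{3}\, w(T)\bigl\||T| + |T^{*}|\bigr\|.
\]
To reach the stated form I still need to dispose of the $w(|T^{*}||T|)$ term. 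I would apply inequality (\ref{eq dragomir}) with $r=1$, replacing the operators there by $|T|$ and $|T^{*}|$, which yields $w(|T^{*}||T|) \leq \tfrac{1}{2}\||T|^{2} + |T^{*}|^{2}\|$. Plugging this back combines the two $\||T|^{2} + |T^{*}|^{2}\|$ coefficients into $\tfrac{1}{12} + \tfrac{1}{12} = \tfrac{1}{6}$, producing exactly the desired bound.

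There is essentially no obstacle here: the whole argument is a parameter-matching exercise built on top of the already-proved Theorem \ref{th3}, together with one application of (\ref{eq dragomir}). The only thing that requires a moment of care is verifying that the choice $f(t) = \tfrac{1}{2}$ really does reproduce the constants $\tfrac{1}{6}$ and $\tfrac{1}{3}$ after the Dragomir bound is absorbed; this is a short arithmetic check that falls out cleanly.
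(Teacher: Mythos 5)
Your proposal is correct and coincides with the paper's own proof: the author likewise takes $g(t)=h(t)=\sqrt{t}$ and $f(t)=\tfrac{1}{2}$ in Theorem \ref{th3} and then absorbs the $w(|T^*||T|)$ term via inequality (\ref{eq dragomir}), merging $\tfrac{1}{12}+\tfrac{1}{12}=\tfrac{1}{6}$. Nothing is missing.
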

\begin{proof}
	Taking $g(t) = h(t) = \sqrt{t},$ and $ f(t) = \frac{1}{2}$ in Theorem \ref{th3}, we get 
	\begin{eqnarray*}
		w^2(T) &\leq& \frac{1}{12} \left\||T|^2 + |T^*|^2\right\| + \frac{1}{6} w\left(|T^*||T|\right) + \frac{1}{3} w(T) \left\||T| + |T^*|\right\|\\
		&\leq& \frac{1}{6} \left\||T|^2 + |T^*|^2\right\|+ \frac{1}{3} w(T) \left\||T| + |T^*|\right\|~~(\mbox{using inequality (\ref{eq dragomir})}.)
	\end{eqnarray*}
Thus, the inequality derived in Theorem \ref{th3} generalizes and improves the upper bound previously obtained by Kittaneh et al. \cite[Th. 2]{Kittaneh MIA}.
	\end{proof}
Next, we demonstrate another refinement of the Cauchy-Schwarz inequality.
	
	\begin{lemma} \label{imp C-S 2}
		Let $x, y \in \mathcal{H},$ and $f: (0,1) \rightarrow \mathbb{R}^+$ be a well-defined function. Then 
		\begin{eqnarray*}
			|\langle x, y \rangle|^2 &\leq& \frac{f(t)}{2(1+ f(t))} \|x\|^2 \|y\|^2 + \frac{2+ f(t)}{2(1+ f(t))} |\langle x, y \rangle| \|x\| \|y\|\\
			&\leq& \|x\|^2 \|y\|^2.
		\end{eqnarray*}
	\end{lemma}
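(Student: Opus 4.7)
The plan is to obtain the refined bound by averaging the estimate of Lemma \ref{gen cauchy} with the plain Cauchy--Schwarz inequality. Concretely, I would write
\[
|\langle x,y\rangle|^2 \;=\; \tfrac{1}{2}|\langle x,y\rangle|^2 \;+\; \tfrac{1}{2}|\langle x,y\rangle|^2,
\]
bound the first half by Lemma \ref{gen cauchy} to get $\tfrac{1}{2}\!\left(\tfrac{f(t)}{1+f(t)}\|x\|^2\|y\|^2 + \tfrac{1}{1+f(t)}|\langle x,y\rangle|\|x\|\|y\|\right)$, and bound the second half by the trivial consequence $|\langle x,y\rangle|^2 \leq |\langle x,y\rangle|\|x\|\|y\|$ of Cauchy--Schwarz. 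Summing the two contributions and collecting the coefficient of $|\langle x,y\rangle|\|x\|\|y\|$ using
\[
\frac{1}{2(1+f(t))} + \frac{1}{2} \;=\; \frac{1 + (1+f(t))}{2(1+f(t))} \;=\; \frac{2+f(t)}{2(1+f(t))},
\]
yields exactly the first asserted inequality.

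For the second inequality, I would just apply the Cauchy--Schwarz bound $|\langle x,y\rangle| \leq \|x\|\|y\|$ to the cross term in the upper bound obtained above and verify that the coefficients sum to $1$: $\tfrac{f(t)}{2(1+f(t))} + \tfrac{2+f(t)}{2(1+f(t))} = \tfrac{2+2f(t)}{2(1+f(t))} = 1$.

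There is no substantive obstacle here, since both steps are elementary algebraic manipulations; the only point requiring care is the arithmetic combining the two $|\langle x,y\rangle|\|x\|\|y\|$ weights. As a sanity check one can verify the estimate directly: after clearing denominators and writing $a=|\langle x,y\rangle|$, $b=\|x\|\|y\|$, the inequality reduces to $0 \leq (b-a)\bigl(f(t)(b+2a)+2a\bigr)$, which is manifestly non-negative thanks to Cauchy--Schwarz ($b\geq a$) and the positivity of $f(t)$.
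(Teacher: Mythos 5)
Your proposal is correct and follows essentially the same route as the paper: the paper writes $2|\langle x,y\rangle|^2 = |\langle x,y\rangle|^2 + |\langle x,y\rangle|\,|\langle x,y\rangle|$ and bounds the first summand by Lemma \ref{gen cauchy} and the second by Cauchy--Schwarz, which is just your averaging argument scaled by $2$. The coefficient arithmetic and the derivation of the second inequality also match the paper's proof.
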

	\begin{proof}
		Using Lemma \ref{gen cauchy} we obtain, 
		\begin{eqnarray*}
		2 |\langle x, y \rangle|^2 &=&  |\langle x, y \rangle|^2 +  |\langle x, y \rangle|  |\langle x, y \rangle|\\
		&\leq& \frac{f(t)}{(1+ f(t))} \|x\|^2 \|y\|^2 + \frac{1}{(1+ f(t))} |\langle x, y \rangle| \|x\| \|y\| + \|x\| \|y\| |\langle x, y \rangle |\\
		\Rightarrow |\langle x, y \rangle|^2 &\leq& \frac{f(t)}{2(1+ f(t))} \|x\|^2 \|y|^2 + \frac{2+ f(t)}{2(1+ f(t))} |\langle x, y \rangle| \|x\| \|y\|.\\
		\end{eqnarray*}
		This establishes the first inequality, while the second inequality follows from the Cauchy-Schwarz inequality.
	\end{proof}
	
Utilizing the preceding improvement of the Cauchy-Schwarz inequality, we proceed to prove our next lemma.

	\begin{lemma} \label{imp buzano}
		Let $x, y, e \in \mathcal{H}$ with $\|e\| =1.$ Then for a well-defined function \\ $f: (0,1) \rightarrow \mathbb{R}^+$, 
		\begin{eqnarray*}
			\left|\langle x, e \rangle \langle e, y \rangle\right|^2 \leq \frac{2+ 3 f(t)}{8(1+f(t))} \|x\|^2 \|y\|^2 + \frac{6+ 5 f(t)}{8(1+ f(t))} \|x\| \|y\| |\langle x, y \rangle|.
		\end{eqnarray*}
	\end{lemma}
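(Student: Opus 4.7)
The plan is to combine Buzano's inequality (Lemma \ref{buzano}) with the refined Cauchy-Schwarz inequality just established in Lemma \ref{imp C-S 2}. The former gives a linear bound on $|\langle x,e\rangle\langle e,y\rangle|$ in terms of $\|x\|\|y\|$ and $|\langle x,y\rangle|$; squaring it produces a cross term $|\langle x,y\rangle|^2$, which is precisely the quantity that Lemma \ref{imp C-S 2} controls quadratically, so the two inequalities mesh cleanly.

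First, I would apply Lemma \ref{buzano} to obtain
\[
|\langle x, e\rangle\langle e, y\rangle|\leq \tfrac{1}{2}\bigl(\|x\|\|y\|+|\langle x,y\rangle|\bigr),
\]
then square both sides to get
\[
|\langle x, e\rangle\langle e, y\rangle|^2\leq \tfrac{1}{4}\bigl(\|x\|^2\|y\|^2+2\|x\|\|y\||\langle x,y\rangle|+|\langle x,y\rangle|^2\bigr).
\]
Next, I would apply Lemma \ref{imp C-S 2} to the term $|\langle x,y\rangle|^2$, replacing it by
\[
\frac{f(t)}{2(1+f(t))}\|x\|^2\|y\|^2+\frac{2+f(t)}{2(1+f(t))}|\langle x,y\rangle|\|x\|\|y\|.
\]

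The remaining work is purely arithmetic: collect the coefficient of $\|x\|^2\|y\|^2$, which becomes $\tfrac{1}{4}\bigl(1+\tfrac{f(t)}{2(1+f(t))}\bigr)=\tfrac{2+3f(t)}{8(1+f(t))}$, and the coefficient of $\|x\|\|y\||\langle x,y\rangle|$, which becomes $\tfrac{1}{4}\bigl(2+\tfrac{2+f(t)}{2(1+f(t))}\bigr)=\tfrac{6+5f(t)}{8(1+f(t))}$. These match the claimed constants exactly, completing the proof.

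There is no real obstacle here beyond careful bookkeeping of the coefficients; the only subtle point is that one must apply Lemma \ref{imp C-S 2} only to the quadratic cross term $|\langle x,y\rangle|^2$ and leave the linear term $|\langle x,y\rangle|$ untouched, otherwise the intended form with a single $|\langle x,y\rangle|$ factor on the right-hand side will not emerge. Since Buzano's lemma is the source of the $e$-dependence and it is used only once at the very beginning, the normalization $\|e\|=1$ enters exactly where expected and no further hypotheses on $e$ are needed.
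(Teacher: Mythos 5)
Your proof is correct and follows exactly the same route as the paper: apply Buzano's inequality, square and expand, then substitute the bound from Lemma \ref{imp C-S 2} for the $|\langle x,y\rangle|^2$ term and collect coefficients. The arithmetic checks out, so nothing further is needed.
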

	\begin{proof}
		Utilizing Lemma \ref{buzano} and Lemma \ref{imp C-S 2}, we obtain
		\begin{eqnarray*}
				\left|\langle x, e \rangle \langle e, y \rangle\right|^2 &\leq& \frac{1}{4} \left(\|x\|\|y\| + |\langle x, y \rangle |\right)^2 \\
				&=& \frac{1}{4} \left(\|x\|^2 \|y\|^2 + |\langle x, y \rangle|^2 + 2 \|x\|\|y\| |\langle x, y \rangle| \right)\\
				&\leq& \frac{1}{4} \left(\|x\|^2 \|y\|^2 + \frac{f(t)}{2(1+f(t))} \|x\|^2 \|y\|^2 + \frac{2+  f(t)}{2(1+f(t))} |\langle x, y \rangle| \|x\| \|y\|\right)\\ &&+ \frac{1}{2} |\langle x, y \rangle| \|x\| \|y\| \\ &=& \frac{2+ 3 f(t)}{8(1+f(t))} \|x\|^2 \|y\|^2 + \frac{6+ 5 f(t)}{8(1+ f(t))} \|x\| \|y\| |\langle x, y \rangle|.
		\end{eqnarray*}
	This concludes our lemma.
	\end{proof}
	
Using the lemma stated above, we proceed to demonstrate our next theorem.
	\begin{theorem} \label{th 4}
		Let $T \in \mathcal{B}(\mathcal{H})$ and $f: (0, 1) \rightarrow \mathbb{R}^+$ be a well-defined function. Then 
		\begin{eqnarray*}
			w^4(T) &\leq& \frac{2 + 3 f(t)}{32(1+ f(t))} \left\| |T|^4 + |T^*|^4 \right\| + \frac{2 + 3 f(t)}{16 (1 + f(t))} w\left( |T^*|^2 |T|^2\right) \\ && + \frac{6+ 5f(t)}{16(1+f(t))} w(T^2) \left\| |T|^2 + |T^*|^2 \right\|.
		\end{eqnarray*}
	\end{theorem}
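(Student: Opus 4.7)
The plan is to square the Cauchy--Schwarz-type bound for $|\langle Tx,x\rangle|^2$ and then exploit the improved Buzano-type estimate of Lemma \ref{imp buzano}, before reducing the resulting products to the operator norms and numerical radii that appear on the right-hand side.

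First I would fix a unit vector $x\in\mathcal{H}$ and use the identity $\langle x,T^*x\rangle=\langle Tx,x\rangle$ (an immediate consequence of the adjoint relation $\langle u,T^*v\rangle=\langle Tu,v\rangle$) to write
$$|\langle Tx,x\rangle|^4 \;=\; |\langle Tx,x\rangle\,\langle x,T^*x\rangle|^2.$$
Applying Lemma \ref{imp buzano} with $e=x$ (unit) and the vectors $Tx$, $T^*x$ in place of $x,y$, and using $\langle Tx,T^*x\rangle=\langle T^{2}x,x\rangle$, I obtain
$$|\langle Tx,x\rangle|^4 \leq \frac{2+3f(t)}{8(1+f(t))}\|Tx\|^2\|T^*x\|^2 + \frac{6+5f(t)}{8(1+f(t))}\|Tx\|\|T^*x\|\,|\langle T^{2}x,x\rangle|.$$
Next I would bound the two factors separately. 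For the first, I would rewrite $\|Tx\|^2\|T^*x\|^2=\langle|T|^2x,x\rangle\,\langle|T^*|^2x,x\rangle$ and apply the classical Buzano inequality (Lemma \ref{buzano}) with $e=x$ to the vectors $|T|^2x$ and $|T^*|^2x$, followed by the arithmetic--geometric mean inequality on $\||T|^2x\|\,\||T^*|^2x\|$, yielding
$$\|Tx\|^2\|T^*x\|^2 \leq \tfrac{1}{4}\langle(|T|^4+|T^*|^4)x,x\rangle + \tfrac{1}{2}|\langle|T^*|^2|T|^2x,x\rangle|.$$
A direct AM--GM gives the simpler estimate $\|Tx\|\,\|T^*x\|\leq \tfrac{1}{2}\langle(|T|^2+|T^*|^2)x,x\rangle$. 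Substituting these two estimates into the previous display and taking the supremum over all unit vectors $x$ then produces the inequality claimed in Theorem \ref{th 4}.

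I do not expect any genuine obstacle here; the argument is a clean composition of Lemma \ref{imp buzano} with the classical Buzano inequality and AM--GM. The only point requiring real care is the bookkeeping of the factors $\tfrac{1}{4}$ and $\tfrac{1}{2}$ produced by the two successive Buzano/AM--GM applications, so that the denominators $32(1+f(t))$ and $16(1+f(t))$ in the final bound come out exactly right. One also has to take $e=x$ (the same unit vector) at every step, rather than $Tx$ or $T^{*}x$, so that the remaining inner products collapse to $\langle T^{2}x,x\rangle$ and $\langle|T^*|^2|T|^2x,x\rangle$, whose suprema over unit $x$ are precisely the numerical radii $w(T^{2})$ and $w(|T^*|^2|T|^2)$ appearing in the statement.
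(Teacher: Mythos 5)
Your proposal is correct and follows essentially the same route as the paper's own proof: apply Lemma \ref{imp buzano} with $Tx$, $T^*x$, $e=x$, then treat the term $\|Tx\|^2\|T^*x\|^2=\langle|T|^2x,x\rangle\langle|T^*|^2x,x\rangle$ with Buzano's inequality plus AM--GM, bound $\|Tx\|\,\|T^*x\|$ by $\tfrac12\langle(|T|^2+|T^*|^2)x,x\rangle$, and take the supremum over unit vectors. The constants $\tfrac{2+3f(t)}{32(1+f(t))}$, $\tfrac{2+3f(t)}{16(1+f(t))}$, and $\tfrac{6+5f(t)}{16(1+f(t))}$ all come out exactly as in the statement.
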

	\begin{proof}
	Replacing $x$ with $Tx$, $y$ with $T^*x$, and $e$ with $x$ where $\|x\|=1$ in Lemma \ref{imp buzano}, we have 
		
	\begin{eqnarray*}
		|\langle Tx, x \rangle|^4 &\leq& \frac{2 + 3 f(t)}{8(1+ f(t))} \|Tx\|^2 \|T^*x\|^2 + \frac{6+ 5 f(t)}{8(1+ f(t))} \|Tx\| \|T^*x\| |\langle Tx, T^*x \rangle|\\
		&=& \frac{2 + 3 f(t)}{8(1+ f(t))} \langle |T|^2x, x \rangle \langle x, |T^*|^2x \rangle + \frac{6+ 5 f(t)}{8(1+ f(t))} \|Tx\| \|T^*x\| |\langle Tx, T^*x \rangle|\\
		&\leq& \frac{2 + 3 f(t)}{16(1+ f(t))} \left(\||T|^2x\| \cdot \||T^*|^2 x \| +\left| \left \langle |T|^2 x, |T^*|^2 x \right \rangle\right| \right)\\ &&+ \frac{6+ 5 f(t)}{16(1+ f(t))} \left( \|Tx\|^2 + \|T^*x\|^2 \right) |\langle T^2x, x \rangle|~~(\mbox{using Lemma \ref{buzano}})\\
		&\leq& \frac{2 + 3 f(t)}{32(1+ f(t))} \left(\||T|^2x\|^2 + \||T^*|^2x\| \right)+ \frac{2 + 3 f(t)}{16(1+ f(t))}\left| \left\langle |T^*|^2 |T|^2x, x \right\rangle \right| \\ && +  \frac{6+ 5 f(t)}{16(1+ f(t))} \left\langle\left\{ |T|^2 + |T^*|^2\right\}x,x \right\rangle |\langle T^2x, x \rangle|\\ &=&  \frac{2 + 3 f(t)}{32(1+ f(t))} \left\langle\left\{ |T|^4 + |T^*|^4\right\}x,x \right\rangle + \frac{2 + 3 f(t)}{16(1+ f(t))} \left|\left\langle |T^*|^2 |T|^2x, x \right\rangle \right|\\&&+  \frac{6+ 5 f(t)}{16(1+ f(t))} \left\langle\left\{ |T|^2 + |T^*|^2\right\}x,x \right\rangle |\langle T^2x, x \rangle|\\ &\leq& \frac{2 + 3 f(t)}{32(1+ f(t))} \left\| |T|^4 + |T^*|^4 \right\| + \frac{2 + 3 f(t)}{16 (1 + f(t))} w\left( |T^*|^2 |T|^2\right) \\ && + \frac{6+ 5f(t)}{16(1+f(t))} w(T^2) \left\| |T|^2 + |T^*|^2 \right\|.
	\end{eqnarray*}
Now, by taking the supremum over all $x$ with $\|x\|=1$, we obtain the desired inequality.
	\end{proof}
The following remark demonstrates that our inequality derived in Theorem \ref{th 4} represents a significant improvement over the inequality (\ref{eq el kit}) for $r=2$.
	\begin{remark}
			Let $T \in \mathcal{B}(\mathcal{H})$ and $f: (0, 1) \rightarrow \mathbb{R}^+$ be a well-defined function. Then 
		\begin{eqnarray*}
			w^4(T) &\leq& \frac{2 + 3 f(t)}{32(1+ f(t))} \left\| |T|^4 + |T^*|^4 \right\| + \frac{2 + 3 f(t)}{16 (1 + f(t))} w\left( |T^*|^2 |T|^2\right) \\ && + \frac{6+ 5f(t)}{16(1+f(t))} w(T^2) \left\| |T|^2 + |T^*|^2 \right\|\\
			&\leq& \frac{1}{2}  \left\| |T|^4 + |T^*|^4 \right\|.
		\end{eqnarray*}
	\end{remark}
	\begin{proof}
	The second inequality ensues from the application of inequality (\ref{eq dragomir}), combined with Lemmas (\ref{positive op}) and (\ref{convex op}), alongside the observation that $w(T^2) \leq w^2(T)$.
	\end{proof}
	Our subsequent lemma represents another improvement  of the Cauchy-Schwarz  inequality.
	\begin{lemma} \label{imp buzano 2}
		Let $x, y, e \in \mathcal{H},$ with $\|e\|=1.$ Then for a well-defined function $f:(0,1) \rightarrow \mathbb{R}^+$ 
		\begin{eqnarray*}
		\left|\langle x, e \rangle \langle e, y \rangle\right|^2 &\leq&  \frac{f(t)}{4(1+f(t))} \left(\|x\|^2 \|y\|^2 ++ |\langle x, y \rangle|^2 + 2 \|x\|\|y\| \langle x, y \rangle| \right)\\ && + \frac{1}{2(1+f(t))} \left|\langle x, e \rangle \langle e, y \rangle\right| \left( \|x\|\|y\| + |\langle x, y\rangle|\right).
		\end{eqnarray*}
	\end{lemma}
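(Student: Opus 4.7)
The plan is to mimic the enhancement trick used in Lemma \ref{gen cauchy}, but with Buzano's inequality playing the role of the Cauchy--Schwarz inequality. Setting
\[A := |\langle x, e \rangle \langle e, y \rangle| \quad\text{and}\quad B := \tfrac{1}{2}\bigl(\|x\|\|y\| + |\langle x, y \rangle|\bigr),\]
Lemma \ref{buzano} reads precisely $A \le B$. This immediately yields both $A^{2} \le AB$ and $A^{2} \le B^{2}$, which are the exact analogues of the bounds $|\langle x,y\rangle|^{2} \le |\langle x,y\rangle|\,\|x\|\|y\|$ and $|\langle x,y\rangle|^{2} \le \|x\|^{2}\|y\|^{2}$ that drove the proof of Lemma \ref{gen cauchy}.

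Next I would exploit $B^{2}-A^{2}\ge 0$ and $f(t)>0$ to write
\[A^{2} \;\le\; AB + f(t)\bigl(B^{2}-A^{2}\bigr),\]
collect the $A^{2}$ terms and divide by $1+f(t)$ to obtain
\[A^{2} \;\le\; \frac{1}{1+f(t)}\,AB \;+\; \frac{f(t)}{1+f(t)}\,B^{2}.\]

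Finally I would simply expand
\[AB = \tfrac{1}{2}|\langle x, e \rangle \langle e, y \rangle|\bigl(\|x\|\|y\| + |\langle x, y \rangle|\bigr),\]
\[B^{2} = \tfrac{1}{4}\bigl(\|x\|^{2}\|y\|^{2} + |\langle x, y \rangle|^{2} + 2\|x\|\|y\|\,|\langle x, y \rangle|\bigr),\]
and substitute into the previous display; this reproduces the claimed inequality term for term. I do not foresee any real obstacle here: the entire content of the argument is the observation that Buzano's estimate can be sharpened by the same \emph{add-and-subtract-$A^{2}$} device already used once to upgrade Cauchy--Schwarz in Lemma \ref{gen cauchy}. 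The only care required is the bookkeeping of the prefactors $\tfrac{1}{2}$ and $\tfrac{1}{4}$ produced by $B$ and $B^{2}$, which together with the $\tfrac{1}{1+f(t)}$ and $\tfrac{f(t)}{1+f(t)}$ weights yield the coefficients $\tfrac{1}{2(1+f(t))}$ and $\tfrac{f(t)}{4(1+f(t))}$ appearing in the statement.
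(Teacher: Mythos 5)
Your proposal is correct and is essentially the paper's own argument: the paper splits $A^2=\frac{f(t)}{1+f(t)}A^2+\frac{1}{1+f(t)}A^2$ and bounds the two pieces by $B^2$ and $AB$ respectively via Buzano, which is just an algebraic rearrangement of your add-and-subtract step $A^2\le AB+f(t)(B^2-A^2)$; both lead to the identical inequality $A^2\le\frac{1}{1+f(t)}AB+\frac{f(t)}{1+f(t)}B^2$ followed by the same expansion of $B^2$ and $AB$. Your coefficient bookkeeping is also correct and matches the statement (whose double $+$ and missing absolute-value bar are merely typographical).
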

	\begin{proof}
	Utilizing Lemma \ref{buzano}, we get
		\begin{eqnarray*}
			\left|\langle x, e \rangle \langle e, y \rangle\right|^2 &=& \frac{f(t)}{(1+f(t))} \left|\langle x, e \rangle \langle e, y \rangle\right|^2 + \frac{1}{(1+f(t))} \left|\langle x, e \rangle \langle e, y \rangle\right|^2\\
			&\leq& \frac{f(t)}{4(1+f(t))} \left( \|x\|\|y\| + |\langle x, y\rangle|\right)^2 \\&&+ \frac{1}{2(1+f(t))} 	\left|\langle x, e \rangle \langle e, y \rangle\right| \left( \|x\|\|y\| + |\langle x, y\rangle|\right)\\
			&=&\frac{f(t)}{4(1+f(t))} \left(\|x\|^2 \|y\|^2 ++ |\langle x, y \rangle|^2 + 2 \|x\|\|y\| \langle x, y \rangle| \right)\\ && + \frac{1}{2(1+f(t))} \left|\langle x, e \rangle \langle e, y \rangle\right| \left( \|x\|\|y\| + |\langle x, y\rangle|\right).
		\end{eqnarray*}
	\end{proof}
	
Building upon the above lemma, we proceed to establish our next theorem.
\begin{theorem} \label{th5}
	Let $T \in \mathcal{B}(\mathcal{H}),$ and $f:(0,1) \rightarrow \mathbb{R}^+$ be a well-defined function. Then 
	\begin{eqnarray*}
		w^4(T) &\leq& \frac{f(t)}{16(1+f(t))} \left\||T|^4 + |T^*|^4
\right\| + \frac{f(t)}{8(1+f(t))} w\left(|T^*|^2 |T|^2 \right)\\ && + \frac{f(t)}{4(1+f(t))} w^2(T^2) + \frac{f(t)}{4(1+f(t))} 	\left\||T|^2 + |T^*|^2 \right\| w(T^2) \\ && + \frac{1}{4(1+f(t))} w^2(T)	\left\||T|^2 + |T^*|^2 \right\|  + \frac{1}{2(1+f(t))} w^2(T) w(T^2).
\end{eqnarray*}
\end{theorem}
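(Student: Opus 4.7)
The plan is to mimic the proof of Theorem \ref{th 4} but with the sharper Lemma \ref{imp buzano 2} in place of Lemma \ref{imp buzano}. Specifically, I would fix a unit vector $x \in \mathcal{H}$ and substitute $x \mapsto Tx$, $y \mapsto T^{*}x$, $e \mapsto x$ into Lemma \ref{imp buzano 2}. Since $\langle x, T^{*}x \rangle = \overline{\langle Tx, x \rangle}$, the left-hand side becomes $|\langle Tx, x\rangle|^{4}$, while on the right we use the standard identifications $\|Tx\|^{2}=\langle |T|^{2}x, x\rangle$, $\|T^{*}x\|^{2} = \langle |T^{*}|^{2}x, x\rangle$, and $\langle Tx, T^{*}x\rangle = \langle T^{2}x, x\rangle$, producing a six-term upper bound for $|\langle Tx, x\rangle|^{4}$.

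Next I would reduce the mixed term $\|Tx\|^{2}\|T^{*}x\|^{2}$, which appears with coefficient $\tfrac{f(t)}{4(1+f(t))}$. Writing this as $\langle |T|^{2}x, x \rangle\langle x, |T^{*}|^{2}x\rangle$, I apply Buzano's inequality (Lemma \ref{buzano}) with the choice $(|T|^{2}x,\, |T^{*}|^{2}x,\, x)$, followed by the AM-GM step $\||T|^{2}x\|\||T^{*}|^{2}x\| \leq \tfrac{1}{2}\bigl(\||T|^{2}x\|^{2} + \||T^{*}|^{2}x\|^{2}\bigr) = \tfrac{1}{2}\langle(|T|^{4}+|T^{*}|^{4})x,x\rangle$. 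This produces the terms involving $\||T|^{4}+|T^{*}|^{4}\|$ and $w(|T^{*}|^{2}|T|^{2})$ with the correct coefficients $\tfrac{1}{4}$ and $\tfrac{1}{2}$ (which, combined with the outer factor, give $\tfrac{f(t)}{16(1+f(t))}$ and $\tfrac{f(t)}{8(1+f(t))}$).

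For the remaining quantities I would use the elementary estimate $\|Tx\|\|T^{*}x\| \leq \tfrac{1}{2}\bigl(\|Tx\|^{2}+\|T^{*}x\|^{2}\bigr) = \tfrac{1}{2}\langle(|T|^{2}+|T^{*}|^{2})x,x\rangle$ wherever $\|Tx\|\|T^{*}x\|$ appears (both inside the $f(t)$-block, multiplying $|\langle T^{2}x,x\rangle|$, and inside the second block multiplying $|\langle Tx,x\rangle|^{2}$). The squared cross-term $|\langle T^{2}x,x\rangle|^{2}$ is kept as is, and $|\langle Tx,x\rangle|^{2}|\langle T^{2}x,x\rangle|$ is also retained without further manipulation.

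The final step is to take the supremum over unit vectors $x$, using $\langle Ax,x\rangle \le \|A\|$ for positive $A$ on the norm-type terms and $|\langle Bx,x\rangle| \le w(B)$ on the inner-product terms, together with $|\langle Tx,x\rangle|^{2}\le w^{2}(T)$. Collecting the six resulting summands yields exactly the stated bound. The main bookkeeping obstacle is tracking coefficients carefully through the nested application of Buzano plus AM-GM inside the $f(t)$-block, since it is responsible for the $\tfrac{1}{16}$ and $\tfrac{1}{8}$ factors, and for ensuring the $w(T^{2})$, $w^{2}(T^{2})$, and $w^{2}(T)$ terms end up with coefficients $\tfrac{f(t)}{4(1+f(t))}$, $\tfrac{f(t)}{4(1+f(t))}$, $\tfrac{1}{4(1+f(t))}$, and $\tfrac{1}{2(1+f(t))}$ as claimed.
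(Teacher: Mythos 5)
Your proposal is correct and follows essentially the same route as the paper's own proof: substitution of $(Tx, T^{*}x, x)$ into Lemma \ref{imp buzano 2}, Buzano's inequality plus AM--GM applied to $\langle |T|^{2}x,x\rangle\langle x,|T^{*}|^{2}x\rangle$ to generate the $\||T|^{4}+|T^{*}|^{4}\|$ and $w(|T^{*}|^{2}|T|^{2})$ terms, AM--GM on each occurrence of $\|Tx\|\|T^{*}x\|$, and a final supremum over unit vectors. The coefficients you track all match the stated bound.
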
	
	
	\begin{proof}
		Let $x \in \mathcal{H},$with $\|x\|=1.$	Replacing $x$ with $Tx$, $y$ with $T^*x$, and $e$ with $x$ where $\|x\|=1$ in Lemma \ref{imp buzano 2}, we have 
		\begin{eqnarray*}
			|\langle Tx, x \rangle|^4 &\leq&  \frac{f(t)}{4(1+f(t))} \left(\|Tx\|^2 \|T^*x\|^2 + |\langle Tx, T^*x \rangle|^2 + 2 \|Tx\|\|T^*x\| \left|\langle Tx, T^*x \rangle| \right| \right)\\ && + \frac{1}{2(1+f(t))} \left|\langle Tx, x \rangle \langle x, T^*x \rangle\right| \left( \|Tx\|\|T^*x\| + |\langle Tx, T^*x\rangle|\right)
			\end{eqnarray*}
			\begin{eqnarray*}
			&\leq&  \frac{f(t)}{4(1+f(t))} \left\langle |T|^2x, x \right\rangle \left \langle x, |T^*|^2x \right\rangle +  \frac{f(t)}{4(1+f(t))} \left|\langle T^2x, x \rangle \right|^2\\ &&+ \frac{f(t)}{4(1+f(t))} \left(\|Tx\|^2+ \|T^*x\|^2 \right) |\langle T^2x, x \rangle|\\ && + \frac{1}{4(1+f(t))} |\langle Tx, x \rangle|^2 \left(\|Tx\|^2 + \|T^*x\|^2 \right)\\ && + \frac{1}{2(1+ f(t))}  |\langle Tx, x \rangle|^2 |\langle T^2x, x \rangle| \\
			&\leq& \frac{f(t)}{8(1+f(t))} \left(\||T|^2x \| \cdot\||T^*|^2x \| + \left|\left\langle |T|^2x, |T^*|^2x \right\rangle\right| \right) + \frac{f(t)}{4(1+f(t))} \left|\langle T^2x, x \rangle\right|^2 \\ && + \frac{f(t)}{4(1+f(t))}\left|\langle T^2x, x \rangle\right| \left\| |T|^2 + |T^*|^2 \right\|+ \frac{1}{4(1+f(t))} |\langle Tx, x \rangle|^2 \left\| |T|^2 + |T^*|^2 \right\| \\ && + \frac{1}{2(1+f(t))}|\langle Tx, x \rangle|^2 |\langle T^2x, x \rangle|~~(\mbox{using Lemma \ref{buzano}})\\
			&\leq& \frac{f(t)}{16(1+f(t))}\left \||T|^4 + |T^*|^4 \right \| + \frac{f(t)}{8(1+f(t))} w \left(|T^*|^2 |T|^2 \right) +\\ && \frac{f(t)}{4(1+f(t))} w^2 (T^2) + \frac{f(t)}{4(1+f(t))}w(T^2) \left\| |T|^2 + |T^*|^2 \right\|\\&&+ \frac{1}{4(1+f(t))} w^2(T) \left\| |T|^2 + |T^*|^2 \right\|  + \frac{1}{2(1+f(t))} w^2(T) w(T^2)
			\end{eqnarray*}
			Now, taking supremum over all $x$, with $\|x\| =1,$ we get our required inequality.
				\end{proof}
The following corollary can be obtained from the above-mentioned theorem.
	\begin{cor}
		Let $T \in \mathcal{B}(\mathcal{H}),$ and $f:(0,1) \rightarrow \mathbb{R}^+$ be a well-defined function. Then 
			\begin{eqnarray*}
			w^4(T) &\leq& \frac{f(t)}{16(1+f(t))} \left\||T|^4 + |T^*|^4
			\right\| + \frac{f(t)}{8(1+f(t))} w\left(|T^*|^2 |T|^2 \right)\\ && + \frac{f(t)}{4(1+f(t))} w^2(T^2) + \frac{f(t)}{4(1+f(t))} 	\left\||T|^2 + |T^*|^2 \right\| w(T^2) \\ && + \frac{1}{4(1+f(t))} w^2(T)	\left\||T|^2 + |T^*|^2 \right\|  + \frac{1}{2(1+f(t))} w^2(T) w(T^2)\\
			&\leq& \frac{1}{2}  \left\||T|^4 + |T^*|^4
			\right\| .
		\end{eqnarray*}
	\end{cor}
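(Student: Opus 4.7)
The first inequality is just the statement of Theorem~\ref{th5}, so the entire task is to prove the second inequality. The plan is to dominate each of the six summands produced by Theorem~\ref{th5} by an explicit multiple of $M := \bigl\||T|^4 + |T^*|^4\bigr\|$ and then check that the resulting coefficients combine to exactly $\tfrac{1}{2}$.

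All of the auxiliary bounds I need are already in the paper. Writing $N := \bigl\||T|^2 + |T^*|^2\bigr\|$, I would invoke inequality~(\ref{eq dragomir}) in three ways: with $r=1$ and $S = T^*$ (so that $S^*T = T^2$), giving $w(T^2) \leq \tfrac{1}{2} N$; with $r=2$ and the same $S$, giving $w^2(T^2) \leq \tfrac{1}{2} M$; and with $r=1$, $S = |T^*|^2$ and the other operator $|T|^2$ (both self-adjoint and positive), giving $w(|T^*|^2|T|^2) \leq \tfrac{1}{2} M$. Inequality~(\ref{eq el kit}) with $r=1$ supplies $w^2(T) \leq \tfrac{1}{2} N$. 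Finally, Lemma~\ref{convex op} applied with $r=2$, $A = |T|^2$ and $B = |T^*|^2$ furnishes the crucial quadratic comparison $N^2 \leq 2M$.

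With these in hand, the substitution is mechanical. The first three summands of Theorem~\ref{th5} become direct multiples of $M$ with coefficients $\tfrac{f(t)}{16(1+f(t))}$, $\tfrac{f(t)}{16(1+f(t))}$, and $\tfrac{2f(t)}{16(1+f(t))}$. The remaining three summands contain the products $N\cdot w(T^2)$, $w^2(T)\cdot N$, and $w^2(T)\cdot w(T^2)$, which the bounds above dominate by $\tfrac{N^2}{2}$, $\tfrac{N^2}{2}$, and $\tfrac{N^2}{4}$ respectively; replacing each $N^2$ by $2M$ via Lemma~\ref{convex op} produces coefficients $\tfrac{4f(t)}{16(1+f(t))}$, $\tfrac{4}{16(1+f(t))}$, and $\tfrac{4}{16(1+f(t))}$ in front of $M$. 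Summing the six numerators gives $8f(t)+8 = 8(1+f(t))$ over the common denominator $16(1+f(t))$, which collapses exactly to $\tfrac{M}{2}$.

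The only real pitfall is that one must bound $w(T^2)$ \emph{directly} through~(\ref{eq dragomir}); the cheap estimate $w(T^2) \leq w^2(T)$ would leave extra $w^2(T)$ factors in the non-leading terms and the arithmetic at the end would no longer close up to $\tfrac{1}{2}$. Beyond this observation, the proof amounts to a routine arithmetic verification once the five auxiliary bounds are in place.
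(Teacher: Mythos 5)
Your proof is correct and uses the same ingredients the paper cites for this step --- inequality (\ref{eq dragomir}), Lemma \ref{convex op}, and a bound on $w^2(T)$ --- merely filling in the arithmetic that the paper leaves as a one-line remark; the six numerators do indeed sum to $8(1+f(t))$ over the common denominator $16(1+f(t))$. One small correction: your closing ``pitfall'' is not real, because the paper's route via the power inequality $w(T^2)\leq w^2(T)$ combined with $w^2(T)\leq \tfrac12\left\||T|^2+|T^*|^2\right\|$ yields the same estimate $w(T^2)\leq \tfrac12\left\||T|^2+|T^*|^2\right\|$ as your direct appeal to (\ref{eq dragomir}), so the arithmetic still closes to $\tfrac12$ either way.
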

		\begin{proof}
	The second inequality follows from inequality (\ref{eq dragomir}), Lemma (\ref{positive op}), (\ref{convex op}), and the numerical radius power inequality. Therefore, our inequality derived in Theorem \ref{th5} improves the inequality (\ref{eq el kit}) for $r=2$.
	\end{proof}
	If we select $f(t) = \frac{1}{2}$ in Theorem \ref{th5}, we obtain the subsequent inequality, which improves the result presented in \cite[Th. 3]{Kittaneh MIA}.
	\begin{remark}
		Let $T \in \mathcal{B}(\mathcal{H}).$ Then 
		\begin{eqnarray*}
			w^4(T) &\leq& \frac{1}{48}  \left\||T|^4 + |T^*|^4
			\right\|  + \frac{1}{24} w\left(|T^*|^2 |T|^2 \right) \\&& + \frac{1}{12} \left(w^2(T^2) + w(T^2)  \left\||T|^2 + |T^*|^2
			\right\|  \right) \\ && + \frac{1}{3} \left(\frac{1}{2}  \left\||T|^2 + |T^*|^2	\right\| + w(T^2) \right).
		\end{eqnarray*}
	\end{remark}
	
	\begin{lemma} \label{imp buzano 3}
		Let $x, y, e \in \mathcal{H}$ with $\|e\|=1,$ and $n \geq 1.$ Then for any well-defined function $f:(0,1) \rightarrow \mathbb{R}^+,$ 
		\begin{eqnarray*}
			\left| \langle x, e \rangle \langle e, y \rangle\right|^{2n} &\leq& \frac{1}{2^{2n}} \frac{1+ 2 f(t)}{1+ f(t)} \|x\|^{2n} \|y\|^{2n} + \frac{1}{2^{2n} (1+f(t))} \|x\|^n \|y\|^n |\langle x, y \rangle|^n \\ && + \frac{1}{2^{2n}} \sum_{r=1}^{2n-1} \binom{2n}{r} \|x\|^r \|y\|^r |\langle x, y \rangle|^{2n-r}.
		\end{eqnarray*}
	\end{lemma}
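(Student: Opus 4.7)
The plan is to combine Buzano's inequality (Lemma \ref{buzano}) with a binomial expansion, and then apply the generalized Cauchy--Schwarz trick underlying Lemma \ref{gen cauchy} to absorb the two extreme binomial terms into the shape required by the statement.

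First, I would begin with Buzano's inequality
\[
|\langle x, e\rangle \langle e, y\rangle| \leq \tfrac{1}{2}\bigl(\|x\|\|y\| + |\langle x, y\rangle|\bigr),
\]
raise both sides to the $2n$-th power, and expand by the binomial theorem, obtaining
\[
|\langle x, e\rangle \langle e, y\rangle|^{2n} \leq \frac{1}{2^{2n}} \left[\|x\|^{2n}\|y\|^{2n} + |\langle x, y\rangle|^{2n} + \sum_{r=1}^{2n-1}\binom{2n}{r}\|x\|^r\|y\|^r|\langle x, y\rangle|^{2n-r}\right].
\]
The middle sum already matches the summation appearing in the statement, so only the two boundary contributions $\|x\|^{2n}\|y\|^{2n}$ and $|\langle x, y\rangle|^{2n}$ need reshaping.

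Next, I would apply the argument from the proof of Lemma \ref{gen cauchy} to the nonnegative scalar pair $A = |\langle x, y\rangle|^n$, $B = \|x\|^n\|y\|^n$. Since $A \leq B$ by the Cauchy--Schwarz inequality raised to the $n$-th power, the same short computation $A^2 \leq AB \leq AB + f(t)(B^2 - A^2)$ yields
\[
|\langle x, y\rangle|^{2n} \leq \frac{f(t)}{1+f(t)}\|x\|^{2n}\|y\|^{2n} + \frac{1}{1+f(t)}\|x\|^n\|y\|^n |\langle x, y\rangle|^n.
\]
Substituting this back into the binomial estimate and using the elementary identity $1 + \frac{f(t)}{1+f(t)} = \frac{1+2f(t)}{1+f(t)}$ to merge coefficients of $\|x\|^{2n}\|y\|^{2n}$ produces precisely the two boundary terms asserted in the lemma, while the middle sum is untouched.

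There is no real obstacle beyond careful coefficient bookkeeping; the only conceptual step is to recognize that the trick from Lemma \ref{gen cauchy} is in truth an inequality about any two nonnegative scalars with $A \leq B$, so it may legitimately be applied with $(|\langle x, y\rangle|^n, \|x\|^n\|y\|^n)$ in place of the original $(|\langle x, y\rangle|, \|x\|\|y\|)$. Once that substitution is spotted, the remainder is purely algebraic.
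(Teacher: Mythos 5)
Your proposal is correct and follows the same skeleton as the paper's proof: Buzano's inequality, the binomial expansion, and then the generalized Cauchy--Schwarz trick applied to the extreme term $|\langle x, y\rangle|^{2n}$. The one place you diverge is in how that term is reshaped. The paper applies Lemma \ref{gen cauchy} at the level of squares, obtaining $|\langle x,y\rangle|^{2n} \leq \left(\frac{f(t)}{1+f(t)}\|x\|^2\|y\|^2 + \frac{1}{1+f(t)}\|x\|\|y\||\langle x,y\rangle|\right)^n$, and then passes to the separated form $\frac{f(t)}{1+f(t)}\|x\|^{2n}\|y\|^{2n} + \frac{1}{1+f(t)}\|x\|^n\|y\|^n|\langle x,y\rangle|^n$ --- a step it labels as an equality but which is really an inequality requiring the convexity of $s \mapsto s^n$. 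You instead observe that the argument of Lemma \ref{gen cauchy} is a statement about any nonnegative scalars $A \leq B$ and apply it directly to $A = |\langle x,y\rangle|^n$, $B = \|x\|^n\|y\|^n$, which lands on the separated form in one step with no convexity needed. Your route is slightly cleaner and in fact repairs the paper's mislabelled equality; both yield the identical final bound after merging the coefficient $1 + \frac{f(t)}{1+f(t)} = \frac{1+2f(t)}{1+f(t)}$.
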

	\begin{proof}
		Using Lemma \ref{buzano} we have,
		\begin{eqnarray*}
				\left| \langle x, e \rangle \langle e, y \rangle\right|^{2n} &\leq& \frac{1}{2^{2n}} \left(\|x\|\|y\| + |\langle x, y \rangle | \right)^{2n}\\
				&=& \frac{1}{2^{2n}} \left(\|x\|^{2n} \|y\|^{2n} +  |\langle x, y \rangle|^{2n} +    \sum_{r=1}^{2n-1} \binom{2n}{r} \|x\|^r \|y\|^r |\langle x, y \rangle|^{2n-r} \right)\\ &&~~\,\,\,\,\,\,\,\,\,\,\,\,\,\,\,\,\,\,\,\,\,\,\,\,\,\,\,\,\,\,\,\,\,\,\,\,\,\,\,\,\,\,\,\,\,\,\,\,\,\,\,\,\,\,\,\,\,\,\,\,\,\,\,\,\,\,\,\,\,\,\,\,\,\,\,\,\,\,\,\,\,~(\mbox{using Binomial theorem})\\
				&\leq& \frac{1}{2^{2n}} \|x\|^{2n} \|y\|^{2n} + \frac{1}{2^{2n}} \left(\frac{f(t)}{1+ f(t)} \|x\|^2 \|y\|^2 + \frac{1}{1+ f(t)} \|x\| \|y\| |\langle x, y \rangle| \right)^n \\ && + \frac{1}{2^{2n}} \sum_{r=1}^{2n-1} \binom{2n}{r} \|x\|^r \|y\|^r |\langle x, y \rangle|^{2n-r}~~(\mbox{using Lemma \ref{gen cauchy}})\\
				&=& \frac{1}{2^{2n}} \frac{1+ 2 f(t)}{1+ f(t)} \|x\|^{2n} \|y\|^{2n} + \frac{1}{2^{2n} (1+f(t))} \|x\|^n \|y\|^n |\langle x, y \rangle|^n \\ && + \frac{1}{2^{2n}} \sum_{r=1}^{2n-1} \binom{2n}{r} \|x\|^r \|y\|^r |\langle x, y \rangle|^{2n-r}.
		\end{eqnarray*}
	\end{proof}
	Using the above lemma we will establish our next theorem.
	\begin{theorem} \label{th6}
		Let $T \in \mathcal{B}(\mathcal{H}),$ and $f: (0,1) \rightarrow \mathbb{R}^+$ be a well-defined function. Then for any $n \geq 1,$ 
		\begin{eqnarray*}
			w^{4n}(T)  &\leq & \frac{1}{2^{2n +2}} \frac{1+ 2 f(t)}{1+ f(t)} \left\| |T|^{4n} + |T^*|^{4n} \right\| +\frac{1}{2^{2n +1}} \frac{1+ 2 f(t)}{1+ f(t)} w \left(|T^*|^{2n} |T|^{2n} \right)  \\ &&+ \frac{1}{2^{2n +1}} \frac{1}{1+ f(t)} \left\| |T|^{2n} + |T^*|^{2n} \right\| w^n(T^2) \\&&+ \frac{1}{2^{2n+1}} \sum_{r=1}^{2n-1} \binom{2n}{r} \left\| |T|^{2r} + |T^*|^{2r} \right\| w^{2n-r} (T^2).
		\end{eqnarray*}
	\end{theorem}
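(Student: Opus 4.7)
The plan is to follow the template of Theorem \ref{th 4} and Theorem \ref{th5}: apply the refined Buzano-type estimate Lemma \ref{imp buzano 3} with the substitution $x \mapsto Tx$, $y \mapsto T^*x$, $e \mapsto x$ for a unit vector $x \in \mathcal{H}$. Since $|\langle Tx, x\rangle \cdot \langle x, T^*x\rangle| = |\langle Tx,x\rangle|^2$, the left-hand side of Lemma \ref{imp buzano 3} becomes $|\langle Tx,x\rangle|^{4n}$, which after taking supremum over unit vectors will yield $w^{4n}(T)$. On the right-hand side the relevant quantities translate as $\|Tx\|^2 = \langle |T|^2 x, x\rangle$, $\|T^*x\|^2 = \langle |T^*|^2 x, x\rangle$ and $\langle Tx, T^*x\rangle = \langle T^2 x, x\rangle$.

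The substitution produces three groups of terms to be estimated. First, the leading term $\frac{1}{2^{2n}}\frac{1+2f(t)}{1+f(t)} \|Tx\|^{2n}\|T^*x\|^{2n}$ is handled by writing $\|Tx\|^{2n}\|T^*x\|^{2n} = \langle |T|^2 x,x\rangle^n \langle |T^*|^2 x,x\rangle^n$, applying Lemma \ref{positive op} to get $\leq \langle |T|^{2n}x,x\rangle \langle x,|T^*|^{2n}x\rangle$, and then applying Buzano's inequality (Lemma \ref{buzano}) followed by the AM-GM step $\||T|^{2n}x\|\,\||T^*|^{2n}x\| \leq \tfrac{1}{2}(\||T|^{2n}x\|^2 + \||T^*|^{2n}x\|^2) = \tfrac12 \langle(|T|^{4n}+|T^*|^{4n})x,x\rangle$, while the inner product term yields $\langle|T^*|^{2n}|T|^{2n}x,x\rangle$. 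After taking the supremum this produces the first two summands of the desired bound.

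Second, the middle term $\frac{1}{2^{2n}(1+f(t))}\|Tx\|^n\|T^*x\|^n |\langle Tx, T^*x\rangle|^n$ is dealt with via $\|Tx\|^n\|T^*x\|^n \leq \tfrac{1}{2}(\|Tx\|^{2n}+\|T^*x\|^{2n}) \leq \tfrac12 \langle(|T|^{2n}+|T^*|^{2n})x,x\rangle$ (using AM-GM and Lemma \ref{positive op}), together with $|\langle Tx, T^*x\rangle|^n = |\langle T^2 x, x\rangle|^n$; taking supremum gives the term involving $\||T|^{2n}+|T^*|^{2n}\| w^n(T^2)$. Third, each summand of the binomial sum is treated identically: $\|Tx\|^r\|T^*x\|^r \leq \tfrac12 \langle(|T|^{2r}+|T^*|^{2r})x,x\rangle$, paired with $|\langle T^2 x,x\rangle|^{2n-r}$, and taking supremum yields the $\binom{2n}{r}\||T|^{2r}+|T^*|^{2r}\| w^{2n-r}(T^2)$ contributions.

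The main obstacle is the leading term: one must pass from the mixed scalar quantity $\langle |T|^{2n}x,x\rangle\langle|T^*|^{2n}x,x\rangle$ (arising after Lemma \ref{positive op}) to a bound expressed as the operator norm of $|T|^{4n}+|T^*|^{4n}$ plus the numerical radius of $|T^*|^{2n}|T|^{2n}$; this is exactly the two-step Buzano$+$AM-GM maneuver already used in Theorems \ref{th 4} and \ref{th5}, and care is needed to carry the constant $\frac{1+2f(t)}{1+f(t)}$ through the splitting $\tfrac14 + \tfrac12$ so that the exponents of $2$ in the denominators agree with the claimed $2^{2n+2}$ and $2^{2n+1}$. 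Once these three groups of estimates are combined and the supremum over unit vectors is taken, the desired inequality follows immediately.
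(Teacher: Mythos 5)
Your proposal is correct and follows essentially the same route as the paper's own proof: the substitution $x\mapsto Tx$, $y\mapsto T^*x$, $e\mapsto x$ in Lemma \ref{imp buzano 3}, McCarthy's inequality (Lemma \ref{positive op}) and AM--GM on the norm products, and the Buzano-plus-AM--GM splitting of $\langle |T|^{2n}x,x\rangle\langle x,|T^*|^{2n}x\rangle$ into the $\||T|^{4n}+|T^*|^{4n}\|$ and $w(|T^*|^{2n}|T|^{2n})$ terms, with the constants $2^{-(2n+2)}$ and $2^{-(2n+1)}$ accounted for exactly as in the paper. No gaps.
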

	\begin{proof}
	Replacing $x$ with $Tx$, $y$ with $T^*x$, and $e$ with $x$ with $\|x\|=1$ in Lemma \ref{imp buzano 3}, we obtain
		\begin{eqnarray*}
			\left| \langle Tx, x \rangle \right|^{4n} &\leq& \frac{1}{2^{2n}} \frac{1+ 2 f(t)}{1+ f(t)} \|Tx\|^{2n} \|T^*x\|^{2n} + \frac{1}{2^{2n} (1+f(t))} \|Tx\|^n \|T^*x\|^n |\langle Tx, T^*x \rangle|^n \\ && + \frac{1}{2^{2n}} \sum_{r=1}^{2n-1} \binom{2n}{r} \|Tx\|^r \|T^*x\|^r |\langle Tx, T^*x \rangle|^{2n-r}\\ &\leq& \frac{1}{2^{2n}} \frac{1+ 2 f(t)}{1+ f(t)} \left\langle |T|^{2n}x, x \right\rangle \left\langle x, |T^*|^2 x \right\rangle \\ &&+ \frac{1}{2^{2n+1} (1+f(t))} \left\langle \left(|T|^{2n} +  |T^*|^{2n} \right)x, x \right \rangle |\langle T^2x, x \rangle|^n \\ && + \frac{1}{2^{2n+1}} \sum_{r=1}^{2n-1} \binom{2n}{r} \left\langle \left( |T|^{2r} + |T^*|^{2r} \right) x, x \right \rangle |\langle T^2 x , x \rangle|^{2n - r}\\
			&\leq&  \frac{1}{2^{2n +1}} \frac{1+ 2 f(t)}{1+ f(t)} \left(\| |T|^{2n}x\| \||T^*|^{2n}x \| + \left|\left\langle |T^*|^{2n} |T|^{2n} x, x \right\rangle  \right| \right)\\ && +  \frac{1}{2^{2n+1} (1+f(t))} \left\langle \left\{|T|^{2n} +  |T^*|^{2n} \right\}x, x \right \rangle |\langle T^2x, x \rangle|^n \\ && + \frac{1}{2^{2n+1}} \sum_{r=1}^{2n-1} \binom{2n}{r} \left\langle \left( |T|^{2r} + |T^*|^{2r} \right) x, x \right \rangle |\langle T^2 x , x \rangle|^{2n - r}~~(\mbox{using Lemma \ref{buzano}})\\
			&\leq& \frac{1}{2^{2n +2}} \frac{1+ 2 f(t)}{1+ f(t)} \left\| |T|^{4n} + |T^*|^{4n} \right\| +\frac{1}{2^{2n +1}} \frac{1+ 2 f(t)}{1+ f(t)} w \left(|T^*|^{2n} |T|^{2n} \right)  \\ &&+ \frac{1}{2^{2n +1}} \frac{1}{1+ f(t)} \left\| |T|^{2n} + |T^*|^{2n} \right\| w^n(T^2) \\&&+ \frac{1}{2^{2n+1}} \sum_{r=1}^{2n-1} \binom{2n}{r}\left\| |T|^{2r} + |T^*|^{2r} \right\| w^{2n-r} (T^2).
		\end{eqnarray*}
	Now, by taking the supremum over all $x$ with $\|x\| =1$, we obtain our required inequality.
	\end{proof}

	If we substitute $n=1$ and apply inequality (\ref{eq dragomir}) and Lemma \ref{convex op} in Theorem \ref{th6}, we derive the following corollary.
	\begin{cor} \label{imp bomi}
			Let $T \in \mathcal{B}(\mathcal{H}),$ and $f: (0,1) \rightarrow \mathbb{R}^+$ be a well-defined function. Then  
			\begin{eqnarray*}
				w^4(T) &\leq&  \frac{1+ 2 f(t)}{8(1+ f(t))} \left\| |T|^4 + |T^*|^4 \right\| +  \frac{3+ 2 f(t)}{8(1+ f(t))} \left\| |T|^2 + |T^*|^2 \right\| w(T^2)\\
				&\leq& \frac{1}{2}  \left\| |T|^4 + |T^*|^4 \right\|.
			\end{eqnarray*}
	\end{cor}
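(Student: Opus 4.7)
The plan is to specialize Theorem~\ref{th6} to $n=1$ and then apply inequality (\ref{eq dragomir}) together with Lemma~\ref{convex op} to collapse the resulting bound to the stated form.

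First, I would substitute $n=1$ directly into the inequality of Theorem~\ref{th6}. Since $2n-1=1$, the sum contains only the term $r=1$ with $\binom{2}{1}=2$, which after dividing by $2^{2n+1}=8$ contributes $\tfrac{1}{4}\|\,|T|^2+|T^*|^2\,\|\,w(T^2)$. Together with the first three summands this gives
\[
w^{4}(T)\le \tfrac{1+2f(t)}{16(1+f(t))}\|\,|T|^4+|T^*|^4\,\|+\tfrac{1+2f(t)}{8(1+f(t))}w(|T^*|^2|T|^2)+\Bigl(\tfrac{1}{8(1+f(t))}+\tfrac14\Bigr)\|\,|T|^2+|T^*|^2\,\|\,w(T^2).
\]

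Next, I would eliminate the middle term by applying (\ref{eq dragomir}) with $r=1$ to $w(|T^*|^2|T|^2)$: writing this as $w(S^*A)$ with $S=|T^*|^2$ and $A=|T|^2$ (both positive, so $|S|=|T^*|^2$ and $|A|=|T|^2$), the inequality gives $w(|T^*|^2|T|^2)\le \tfrac12\|\,|T|^4+|T^*|^4\,\|$. Substituting this and collecting the coefficients $\tfrac{1+2f(t)}{16(1+f(t))}+\tfrac{1+2f(t)}{16(1+f(t))}=\tfrac{1+2f(t)}{8(1+f(t))}$ and $\tfrac{1}{8(1+f(t))}+\tfrac14=\tfrac{3+2f(t)}{8(1+f(t))}$ produces the first stated inequality.

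For the second inequality, I would apply (\ref{eq dragomir}) once more, this time with $r=1$, $S=T^*$, and right factor $T$, yielding $w(T^2)\le \tfrac12\|\,|T|^2+|T^*|^2\,\|$. Then Lemma~\ref{convex op} with $r=2$ applied to $A=|T|^2$ and $B=|T^*|^2$ gives $\|\,|T|^2+|T^*|^2\,\|^2\le 2\|\,|T|^4+|T^*|^4\,\|$. Combining these bounds yields $\|\,|T|^2+|T^*|^2\,\|\,w(T^2)\le \|\,|T|^4+|T^*|^4\,\|$, and the total then simplifies to $\tfrac{(1+2f(t))+(3+2f(t))}{8(1+f(t))}\|\,|T|^4+|T^*|^4\,\|=\tfrac{1}{2}\|\,|T|^4+|T^*|^4\,\|$. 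The calculation is essentially bookkeeping; the only modest subtlety is making the right choice of operator pair in each application of (\ref{eq dragomir}) --- first $(|T^*|^2,|T|^2)$ and then $(T^*,T)$ --- and noticing that Lemma~\ref{convex op} at $r=2$ is the exact ingredient needed for the constants to collapse to $\tfrac12$.
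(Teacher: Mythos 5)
Your proposal is correct and follows exactly the route the paper indicates: specialize Theorem~\ref{th6} to $n=1$, absorb the $w(|T^*|^2|T|^2)$ term via inequality (\ref{eq dragomir}), and obtain the final $\tfrac12\|\,|T|^4+|T^*|^4\,\|$ bound by combining (\ref{eq dragomir}) with Lemma~\ref{convex op}. The coefficient bookkeeping ($\tfrac{1+2f(t)}{16(1+f(t))}+\tfrac{1+2f(t)}{16(1+f(t))}$ and $\tfrac{1}{8(1+f(t))}+\tfrac14=\tfrac{3+2f(t)}{8(1+f(t))}$) checks out, and your identification of the operator pairs in each application of (\ref{eq dragomir}) is the right one.
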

	\begin{remark}
	It is evident that the inequality derived in Corollary \ref{imp bomi} extends the two upper bounds established by Bomi-Domi et al. \cite[Th. 2.1]{Bomi} and Omidvar et al. \cite[Th. 2.1]{omidvar} for $f(t)=1$.
	For $T \in \mathcal{B}(\mathcal{H}),$ 
		 \begin{eqnarray*}
			w^4(T) &\leq&  \frac{3}{16}  \left\| |T|^4 + |T^*|^4 \right\| + \frac{5}{16}  \left\| |T|^2 + |T^*|^2 \right\| w(T^2)\\ &\leq& \frac{3}{8}  \left\| |T|^4 + |T^*|^4 \right\| + \frac{1}{8}  \left\| |T|^2 + |T^*|^2 \right\| w(T^2).
			\end{eqnarray*}
	\end{remark}

\noindent \textbf{Declarations:} \\

\textbf{Conflict of Interest:} The authors declare that there is no conflict of interest.\\

\textbf{Funding:} This research was conducted without any specific funding from external agencies.	\\	

\textbf{Data Availability Statement:}	No new data were created or analyzed in this study. Data sharing is not applicable to this article.

			\bibliographystyle{amsplain}
			
\end{document}